\let\dim\relax 
\DeclareMathOperator{\dim}{dim}
\DeclareMathOperator{\lcm}{lcm} 
 \DeclareMathOperator{\Aut}{Aut}
\DeclareMathOperator{\vcd}{vcd}
\newcommand{\gen}[1]{\left\langle#1\right\rangle} 
\newcommand{\then}{\Rightarrow} 
\newcommand{\Iff}{\Longleftrightarrow} \newcommand{\ds}{\displaystyle}
 \newcommand{\zz}{\mathbb{Z}}
\let\ov\overline
\newtheorem{lemma}{Lemma}[section]
\newtheorem{theorem}[lemma]{Theorem}
\newtheorem{proposition}[lemma]{Proposition}
\newtheorem{corollary}[lemma]{Corollary}
\newcommand{\warning}[1]{\textcolor{red}{#1}}
 \DeclareMathOperator{\U}{{\cal U}}
\title{The cohomology ring of certain families of periodic virtually cyclic groups}
\date{\today}
\author{S\'ergio Tadao Martins}
\affil{Universidade Federal de Santa Catarina}
\author{Daciberg Lima Gon\c calves}
\affil{Universidade de S\~ao Paulo}
\author{M\'arcio de Jesus Soares}
\affil{Universidade Federal de S\~ao Carlos}
\begin{document}

\pagestyle{myheadings}
\markright{Cohomology  of virtually cyclic groups}

\maketitle

\begin{abstract}
Let $G$ be a virtually cyclic of the form $(\zz_a\rtimes \zz_b)\rtimes
\zz$ or $[\zz_a\rtimes(\zz_b\times Q_{2^i})]\rtimes \zz$. We compute
the integral cohomology ring of $G$, and then obtain the periodicity
of the Farell cohomology of these groups.

\bigskip \noindent{\em Keywords:\/} periodic groups, cohomology ring,
Farrell cohomology\par
\bigskip \noindent{\em 2010 Mathematics Subject Classification:} primary:
20J06; secondary: 20F50.
\end{abstract}

\section{Introduction}\label{intro}

Virtually cyclic groups can be considered the simplest family of
groups which contains both the infinite groups and the finite ones.
They play an important role in several subjects, like the Fibered
Isomorphism conjecture by Farrel and Jones, see \cite{FarrellJones},
and the study of space forms of infinite discrete groups. The space
forms for finite groups have been widely studied and, in the similar
problem for infinite discrete groups, the simplest case to consider is
the one where the groups are virtually cyclic. The structure of the
cohomology ring of these groups is an interesting question in its own
right and very useful for the study of the space forms. In more
detail, consider, for the infinite virtually cyclic groups, the
problem of deciding which ones act on homotopy spheres, as well the
classification of the homotopy type of the orbit spaces.

For results about space forms, mostly for finite groups and a few
infinite groups, see for example \cite{DacibergMarek1},
\cite{DacibergMarek2}, \cite{DacibergMarek3}, \cite{DacibergMarek4},
\cite{DacibergMarek5} \cite{DacibergMarek6}, \cite{GG7}, \cite{GG8}
and \cite{GG0}. In particular, for the special question of which
virtually cyclic groups act on homotopy spheres, the answer has been
given in \cite{GG0}. For the classification of the homotopy type of
the orbit spaces the cohomology ring of the group plays an important
role.

The goal of this work is to describe the ring structure of two
families of virtually cyclic groups, namely $(\zz_a\rtimes
\zz_b)\rtimes\zz$ and $[\zz_a\rtimes(\zz_b\times Q_{2^i})]\rtimes
\zz$.

The main  results of this work are
Theorems~\ref{theorem:zazbzring}, \ref{theorem:ggroups} and
\ref{theorem:gring}. Besides describing the cohomology ring structure
of these groups with integral coefficients, we also determine
explicitly the cohomology class which determine the periodicity (in
the sense of~\cite[Section X.6]{Brown82}).

This work contains 3 sections besides the introduction. In
section~\ref{prel} we present the basic tools about the
Lyndon-Hochschild-Serre spectral sequence which is going to be used
later. In section~\ref{zazb} we use section~\ref{prel} for our first
family of groups to obtain the main result which is
Theorem~\ref{theorem:zazbzring}. Section~\ref{zazbq2i} we use
section~\ref{prel} for our second family of groups to obtain the main
results which are Theorems~\ref{theorem:ggroups} and
\ref{theorem:gring}.

This project is in part sponsered by by FAPESP --- Funda\c c\~ao de Amparo
a Pesquisa do Estado de S\~ao Paulo, Projetos Tem\'aticos Topologia
Alg\'ebrica, Geom\'etrica e Differencial --- 2008/57607-6 and
2012/24454-8, and proccess {2013/07510-4}.

\section{Preliminaries}\label{prel}

Given an exact sequence of groups
\begin{equation}\label{eq:basicextension}
\begin{tikzcd}
1 \arrow{r} & H \arrow{r} & G \arrow{r} & Q \arrow{r} & 1,
\end{tikzcd}
\end{equation}
the Lyndon-Hochschild-Serre spectral sequence in cohomology is such
that
\[
E_2^{p,q} = H^p(Q;H^q(H;M)) \Longrightarrow H^{p+q}(G;M)
\]
for any $G$-module $M$. In this paper our main interest is in groups
of the form $F\rtimes_\theta \zz$, in which case the above spectral
sequence gives us the following result for $M=\zz$:

\begin{lemma}
For each positive integer $n$, there is an exact sequence
\[
\begin{tikzcd}
0 \arrow{r} &  H^{n-1}(F;\zz)_{\zz} \arrow{r} & H^n(F\rtimes_{\theta}\zz;\zz) \arrow{r} & H^n(F;\zz)^{\zz} \arrow{r} & 0.
\end{tikzcd}
\]
\end{lemma}
\begin{proof}
Since $H^n(\zz;M) = 0$ if $n\ge 2$, the spectral sequence associated
to the extension
\[
\begin{tikzcd}
  \phantom{,}1 \arrow{r} & F \arrow{r} & F\rtimes_\theta\zz \arrow{r} & \zz \arrow{r} & 1,
\end{tikzcd}
\]
is such that $E_2^{p,q} = 0$ for $p\ge 2$, hence $E_2^{p,q} =
E_\infty^{p,q}$ for all $p$, $q$ and we get for each $n\ge 1$ the
exact sequence
\[
\begin{tikzcd}
  \phantom{.}0 \arrow{r} & E_2^{1,n-1} \arrow{r} & H^n(F\rtimes_{\theta}\zz;\zz) \arrow{r} & E_2^{0,n} \arrow{r} & 0.
\end{tikzcd}
\]
On the other hand,
\begin{align*}
  E_2^{0,n} &= H^0(\zz;H^{n}(F;\zz))=H^{n}(F;\zz)^{\zz} \quad\text{and}\\
  E_2^{1,n-1} &= H^1(\zz;H^{n-1}(F;\zz))=H^{n-1}(F;\zz)_{\zz},
\end{align*}
so the result follows.
\end{proof}

\begin{corollary}\label{cor:FZ}
If $F$ is a finite group with periodic cohomology, then
$H^n(F\rtimes_{\theta}\zz;\zz)\cong H^n(F;\zz)^{\zz}$ for $n$ even and
$H^n(F\rtimes_{\theta}\zz;\zz) \cong H^{n-1}(F;\zz)_{\zz}$ for $n$
odd.
\end{corollary}
\begin{proof}
The result follows from the above Lemma and the fact that a finite
group with periodic cohomology has trivial odd dimensional integral
cohomology groups \cite[Section VI.9, exercise 4, page 159]{Brown82}.
\end{proof}

\bigskip

The Lyndon-Hochschild-Serre spectral sequence can be constructed in a
purely algebraic manner, but it can also be given a topological
interpretation: the extension~(\ref{eq:basicextension}) gives rise to
a fibration sequence
\[
\begin{tikzcd}
K(H,1) \arrow{r} & K(G,1) \arrow{r} & K(Q,1)
\end{tikzcd}
\]
and the resulting Leray-Serre spectral sequence is exactly the
Lyndon-Hochs\-child-Serre sequence (see~\cite{milgrameadem}). Theorem
5.2 of~\cite{McCleary} can now be applied to show that, on the
Lyndon-Hochschild-Serre spectral sequence, the cup product $\smile$
and the product $\cdot_2$ on $E_2$ are related by
\begin{equation}\label{eq:cupE2}
u\cdot_2 v = (-1)^{pq'}(u\smile v)
\end{equation}
for $u\in E_2^{p,q}$ and $v\in E_2^{p',q'}$. Depending on the
properties of $E_2$, this equation may be enough to recover the
cohomology ring $H^*(G;\zz)$ from the Lyndon-Hochschild-Serre spectral
sequence in cohomology. For instance, if $F$ is a finite group with
periodic cohomology, the Lyndon-Hochschild-Serre with $\zz$
coefficients applied to the extension
\[
\begin{tikzcd}
1 \arrow{r} & F \arrow{r} & F\rtimes_\theta\zz \arrow{r} & \zz
\arrow{r} & 1
\end{tikzcd}
\]
is such that $E_2^{p,q}=0$ if $p\ge 2$ or if $q$ is odd, so $E_2^{p,q}
= E_\infty^{p,q}$ and the product on $E_2$ gives us the cup product in
$H^*(F\rtimes_\theta\zz;\zz)$.

\bigskip
Now we make some remarks about the periodicity of the cohomology of
$(F\rtimes_{\theta}\zz)$: if $F$ is a finite group with periodic
cohomology, then $F\rtimes_{\theta}\zz$ has periodic Farrell
cohomology. This follows from~\cite[Theorem X.6.7, item
  (iii)]{Brown82}, since a finite subgroup of $F\rtimes_{\theta}\zz$
is actually a subgroup of $F\times \{0\}\cong F$. Moreover,
$\vcd(F\rtimes_{\theta}\zz)=1$ implies
$\hat{H}^i(F\rtimes_{\theta}\zz;M) \cong H^i(F\rtimes_{\theta}\zz;M)$
for all $i\ge 2$ and every $(F\rtimes_{\theta}\zz)$-module $M$, where
$\hat{H}^*$ denotes the Farrell cohomology. Finally, we know that the
cup product in the Farrell cohomology is compatible with the regular
cup product~\cite[Section X.3]{Brown82}, hence we have the following
result:

\begin{theorem}
Let $F$ be a finite group with periodic cohomology. If $d$ is a
positive integer and $u\in H^d(F\rtimes_{\theta}\zz;\zz)$ is a
cohomology class such that
\[
(u\smile\underline{\phantom{M}})\colon H^i(F\rtimes_{\theta}\zz;M) \to
H^{i+d}(F\rtimes_{\theta}\zz;M)
\]
is an isomorphism for all $i\ge 2$ and every
$(F\rtimes_{\theta}\zz)$-module $M$, then
\[
(\hat{u}\smile\underline{\phantom{M}})\colon
\hat{H}^i(F\rtimes_{\theta}\zz;M) \to
\hat{H}^{i+d}(F\rtimes_{\theta}\zz;M)
\]
is an isomorphism for all $i\in\zz$ and every
$(F\rtimes_{\theta}\zz)$-module $M$, where $\hat{u}$ denotes the image
of $u$ under the canonical isomorphism $H^d(F\rtimes_{\theta}\zz;M)
\stackrel{\cong}{\to} \hat{H}^d(F\rtimes_{\theta}\zz;M)$.
\end{theorem}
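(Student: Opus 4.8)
The plan is to derive the statement formally from the three structural facts recalled just above it; write $G := F\rtimes_\theta\zz$ throughout. Those facts are: (i) $G$ has periodic Farrell cohomology; (ii) since $\vcd(G)=1$, the canonical map $H^i(G;M)\to\hat H^i(G;M)$ is an isomorphism for every $i\ge 2$ and every $G$-module $M$; and (iii) the canonical maps $H^*(G;\zz)\to\hat H^*(G;\zz)$ and $H^*(G;M)\to\hat H^*(G;M)$ are compatible with cup products and with the module structure \cite[Section~X.3]{Brown82}.

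First I would settle the degrees $i\ge 2$. Since $d\ge 1$ we have $i+d\ge 2$ as well, so by (ii) the canonical maps $H^i(G;M)\to\hat H^i(G;M)$ and $H^{i+d}(G;M)\to\hat H^{i+d}(G;M)$ are isomorphisms, and by (iii) they carry $u\smile\underline{\phantom{M}}$ to $\hat u\smile\underline{\phantom{M}}$ (here $\hat u$ is the image of $u$, exactly as in the statement). The source map is an isomorphism by hypothesis, hence $\hat u\smile\underline{\phantom{M}}\colon\hat H^i(G;M)\to\hat H^{i+d}(G;M)$ is an isomorphism for every $i\ge 2$ and every $M$.

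Next I would bootstrap down to all $i\in\zz$ using (i). Periodicity \cite[Section~X.6]{Brown82} supplies an integer $q>0$ and a class $v\in\hat H^q(G;\zz)$ such that $v\smile\underline{\phantom{M}}\colon\hat H^n(G;M)\to\hat H^{n+q}(G;M)$ is an isomorphism for all $n\in\zz$ and all $G$-modules $M$; hence, for every $N\ge 0$, $v^N\smile\underline{\phantom{M}}$ is an isomorphism in every degree. Fix $i\in\zz$ and choose $N$ with $i+Nq\ge 2$. Associativity and graded commutativity of the cup product make the square
\[
\begin{tikzcd}
\hat H^i(G;M) \arrow[r, "\hat{u}\smile"] \arrow[d, "v^N\smile"'] & \hat H^{i+d}(G;M) \arrow[d, "v^N\smile"]\\
\hat H^{i+Nq}(G;M) \arrow[r, "\hat{u}\smile"] & \hat H^{i+d+Nq}(G;M)
\end{tikzcd}
\]
commute up to the sign $(-1)^{dNq}$. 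Its vertical maps are isomorphisms by periodicity, and its bottom map is an isomorphism by the previous paragraph since $i+Nq\ge 2$; therefore the top map is an isomorphism. As $i$ and $M$ were arbitrary, this gives the conclusion.

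I do not anticipate a genuine obstacle: the theorem is a formal consequence of (i)--(iii). The two points that need attention are that the periodicity isomorphism $v\smile\underline{\phantom{M}}$ be available with an arbitrary coefficient module $M$ and not merely with $\zz$ --- which is part of the periodicity statement of \cite[Section~X.6]{Brown82}, where $v$ may be taken invertible in $\hat H^*(G;\zz)$ --- and that the comparison isomorphism $\hat H^i\cong H^i$ above the virtual cohomological dimension respect the module structure over $H^*(G;\zz)$, which is \cite[Section~X.3]{Brown82}.
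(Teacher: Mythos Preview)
Your proposal is correct and follows exactly the route the paper intends: the paper offers no explicit proof, merely recording facts (i)--(iii) and then writing ``hence we have the following result,'' so what you have done is spell out the deduction the authors leave implicit. Your bootstrapping step via the periodicity class $v$ to reach degrees $i<2$ is the natural (and necessary) way to fill the only nontrivial gap in that deduction.
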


\section{The cohomology ring of $(\zz_a\rtimes\zz_b)\rtimes \zz$}\label{zazb}

Let $a$ and $b$ be relatively prime integers and let $G$ be the group
$(\zz_a\rtimes_\alpha\zz_b)\rtimes_\theta\zz$, where $\alpha\colon
\zz_b\to \Aut(\zz_a)$ is given by $\alpha(1_b)(1_a) = r\cdot 1_a$ with
$\gcd(a,(r-1)b)=1$ and $r^b\equiv 1 \pmod{a}$, and $\theta\colon
\zz\to \Aut(\zz_a\rtimes_\alpha\zz_b)$ is given by $\theta(1)(1_a) =
c_a\cdot 1_a$ and $\theta(1)(1_b) = c\cdot 1_a+c_b\cdot
1_b$. See~\cite{DacibergMarek3} for more details on the integers $c$,
$c_a$ and $c_b$.


The Lyndon-Hochschild-Serre spectral sequence in cohomology (with
$\zz$ coefficients) associated to the exact sequence of groups
\[
\begin{tikzcd}
1 \arrow{r} & \zz_a \arrow{r} & (\zz_a\rtimes_\alpha\zz_b) \arrow{r} &
\zz_b \arrow{r} & 1
\end{tikzcd}
\]
is such that $E_2^{p,q} = H^p(\zz_b;H^q(\zz_a;\zz))$. The group
$\zz_b$ acts trivially on $H^0(\zz_a;\zz)$ and acts on
$H^2(\zz_a;\zz)$ by multiplication by $r$. Since the cohomology ring
$H^*(\zz_a;\zz)$ is given by
\[
\frac{\zz[\alpha_2]}{(a\alpha_2=0)}
\]
with $\dim(\alpha_2)=2$, it follows that $\zz_b$ acts on
$H^{2j}(\zz_a;\zz)$ by multiplication by $r^j$. Hence $E_2^{p,q}$ is
possibly non-null only if $p$ and $q$ are even and $pq=0$ and it
follows that

\[
E_\infty^{p,q} = E_2^{p,q} = \begin{cases}
\zz, & \text{if $p=q=0$,} \\
\zz_{\delta_i}, & \text{if $p=0$ and $q=2i>0$,} \\
\zz_b, & \text{if $q=0$ and $p>0$ is even,} \\
0, & \text{otherwise,}
\end{cases}
\]
where $\delta_i = \gcd(r^i-1,a)$. Since $a$ and $b$ are relatively
prime, so are $\delta_i$ and $b$, and we have proved the following:

\begin{proposition}\label{adit}
The integral cohomology groups of $(\zz_a\rtimes_\alpha \zz_b)$, with
$\alpha(1_b)=r\cdot 1_a$, are given by
\[
H^n(\zz_a\rtimes_\alpha\zz_b; \zz) = \begin{cases}
\zz, & \text{if $n=0$,}\\
\zz_{\delta_i}\oplus\zz_b \cong \zz_{\delta_ib}, & \text{if $n=2i > 0$,} \\
0, & \text{if $n$ is odd.}
\end{cases}
\]
\end{proposition}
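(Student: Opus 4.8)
The plan is to read the integral cohomology groups off the Lyndon--Hochschild--Serre spectral sequence of the extension $1\to\zz_a\to\zz_a\rtimes_\alpha\zz_b\to\zz_b\to1$, whose $E_2$ page and $\zz_b$-action have already been described above, and then to settle the one extension problem that arises when reassembling $H^n$ from $E_\infty$. Recall that from $H^*(\zz_a;\zz)=\zz[\alpha_2]/(a\alpha_2)$ with $\dim\alpha_2=2$ and the fact that $\zz_b$ acts on $H^2(\zz_a;\zz)$ by multiplication by $r$, the group $\zz_b$ acts on $H^{2j}(\zz_a;\zz)\cong\zz_a$ by $r^j$, while $H^n(\zz_a;\zz)=0$ for $n$ odd; hence $E_2^{p,q}=H^p(\zz_b;H^q(\zz_a;\zz))=0$ whenever $q$ is odd.

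Next I would compute the surviving entries. For $q=0$ the coefficients are trivial, so $E_2^{p,0}=H^p(\zz_b;\zz)$ is $\zz$ for $p=0$, is $\zz_b$ for $p>0$ even, and vanishes for $p$ odd. For $q=2i>0$ the key input is $\gcd(a,b)=1$: then multiplication by $b$ is an automorphism of the $\zz_b$-module $\zz_a$, so the Tate cohomology $\hat H^p(\zz_b;\zz_a)$, being annihilated by $b$, vanishes in every degree; since $\hat H^p=H^p$ for $p\ge1$ this forces $E_2^{p,2i}=0$ for $p>0$, whereas $E_2^{0,2i}=(\zz_a)^{\zz_b}=\ker\bigl(r^i-1\colon\zz_a\to\zz_a\bigr)$ is cyclic of order $\delta_i=\gcd(r^i-1,a)$. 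All nonzero entries of $E_2$ thus lie on the two axes $p=0$ and $q=0$ with both coordinates even, so each differential $d_r$ has trivial source or trivial target and $E_\infty=E_2$.

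Finally I would reassemble $H^n$ as an iterated extension of the $E_\infty^{p,q}$ with $p+q=n$. For $n$ odd every such group vanishes, so $H^n=0$; for $n=0$ the only contribution is $E_\infty^{0,0}=\zz$. For $n=2i>0$ precisely two entries survive, $E_\infty^{0,2i}=\zz_{\delta_i}$ and $E_\infty^{2i,0}=\zz_b$, giving a short exact sequence $0\to\zz_b\to H^{2i}(\zz_a\rtimes_\alpha\zz_b;\zz)\to\zz_{\delta_i}\to0$. Since $\delta_i\mid a$ and $\gcd(a,b)=1$, the orders $\delta_i$ and $b$ are coprime, so the sequence splits and $H^{2i}\cong\zz_{\delta_i}\oplus\zz_b\cong\zz_{\delta_i b}$. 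The only points beyond bookkeeping are the coprimality argument --- invoked both to kill $H^{>0}(\zz_b;\zz_a)$ and to split this extension --- and the vanishing of the differentials; I do not expect either to be a genuine obstacle.
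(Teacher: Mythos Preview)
Your argument is correct and follows essentially the same route as the paper: both use the Lyndon--Hochschild--Serre spectral sequence for $1\to\zz_a\to\zz_a\rtimes_\alpha\zz_b\to\zz_b\to1$, identify the $\zz_b$-action on $H^{2j}(\zz_a;\zz)$ as multiplication by $r^j$, observe that $E_2^{p,q}$ is concentrated on the axes at even bidegrees, and use $\gcd(a,b)=1$ to resolve the extension. Your Tate-cohomology justification for the vanishing of $E_2^{p,2i}$ with $p>0$ is a detail the paper leaves implicit, but otherwise the arguments coincide.
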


We will now compute the cohomology groups $H^*(G;\zz)$, and in order
to do that we need some simple facts and some notation.

If a cyclic group $H$ acts on the finite cyclic group $\zz_k$, then it
is straightforward to check that $(\zz_k)^H\cong (\zz_k)_H$,
hence $H^n(G;\zz)\cong H^{n+1}(G;\zz)$ if $n\ge 2$ is even.

The action of $\zz$ on $H^0(\zz_a\rtimes_\alpha\zz_b;\zz)$ is trivial,
so $H^0(G;\zz) \cong H^1(G;\zz) \cong \zz$. Also, noticing that
$\Aut(\zz_{\delta_i}\oplus\zz_b)=\Aut(\zz_{\delta_i})\oplus\Aut(\zz_b)$,
if we write
\[
H^{2i}(\zz_a\rtimes_\alpha\zz_b; \zz) = H^0(\zz_b; H^{2i}(\zz_a; \zz)) \oplus
H^{2i}(\zz_b; H^0(\zz_a; \zz)) \cong \zz_{\delta_i}\oplus \zz_b,
\]
then the action of $1\in\zz$ on $H^0(\zz_b; H^{2i}(\zz_a; \zz))$ is
the multiplication by $c_a^i$ on $\zz_{\delta_i}$ and the action of
$1\in\zz$ on $H^{2i}(\zz_b; H^0(\zz_a; \zz))$ is the multiplication by
$c_b^i$ on $\zz_b$, where the integers $c_a$ and $c_b$ come from the
definition of $\theta\colon \zz\to \Aut(\zz_a\rtimes_\alpha\zz_b)$ as
\[
\theta(1)(1_a) = c_a\cdot 1_a \quad\text{and}\quad \theta(1)(1_b) =
c\cdot 1+c_b\cdot 1_b,
\]
as we mentioned in the beginning of this section. Therefore, if we
define $A_j = \gcd(c_a^j-1,\delta_i)$ for $i\equiv j \pmod{d}$ and
$B_j = \gcd(c_b^j-1, b)$, we have the following result:

\begin{theorem}
The integral cohomology groups of $G=(\zz_a\rtimes_\alpha \zz_b)
\rtimes_\theta\zz$ are given by
\[
H^n(G; \zz) = \begin{cases}
\zz, & \text{if $n=0$ or $n=1$}, \\
\zz_{A_j}\oplus \zz_{B_j}, & \text{if $n=2j$ or $n=2j+1$ (and $j>0$)}.
\end{cases}
\]
\end{theorem}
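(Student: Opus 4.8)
The plan is to apply the Lemma of Section~\ref{prel} to the extension
\[
\begin{tikzcd}
1 \arrow{r} & F \arrow{r} & G \arrow{r} & \zz \arrow{r} & 1,
\end{tikzcd}
\]
with $F=\zz_a\rtimes_\alpha\zz_b$, feeding in the computation of $H^*(F;\zz)$ from Proposition~\ref{adit} together with the identification of the $\zz$-action on $H^*(F;\zz)$ carried out in the paragraph preceding the statement.

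First I would record the consequences of Proposition~\ref{adit}: the odd-dimensional groups $H^{2i-1}(F;\zz)$ all vanish, while $H^{2i}(F;\zz)\cong\zz_{\delta_i}\oplus\zz_b$ for $i>0$. Plugging this into the short exact sequence
\[
\begin{tikzcd}
0 \arrow{r} & H^{n-1}(F;\zz)_{\zz} \arrow{r} & H^n(G;\zz) \arrow{r} & H^n(F;\zz)^{\zz} \arrow{r} & 0
\end{tikzcd}
\]
separates the two parities: for $n=2j$ with $j>0$ the left-hand term $H^{2j-1}(F;\zz)_{\zz}$ is $0$, so $H^{2j}(G;\zz)\cong H^{2j}(F;\zz)^{\zz}$, and for $n=2j+1$ the right-hand term $H^{2j+1}(F;\zz)^{\zz}$ is $0$, so $H^{2j+1}(G;\zz)\cong H^{2j}(F;\zz)_{\zz}$. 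The low degrees are immediate: $H^0(G;\zz)=\zz$, and since $H^1(F;\zz)=0$ the sequence gives $H^1(G;\zz)\cong H^0(F;\zz)_{\zz}\cong\zz$.

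It then remains to compute the invariants and coinvariants of $H^{2j}(F;\zz)$ for $j>0$. As explained before the statement, the Lyndon--Hochschild--Serre splitting
\[
H^{2j}(F;\zz)=H^0\bigl(\zz_b;H^{2j}(\zz_a;\zz)\bigr)\oplus H^{2j}\bigl(\zz_b;H^0(\zz_a;\zz)\bigr)\cong\zz_{\delta_j}\oplus\zz_b
\]
is $\zz$-equivariant, and since $\gcd(\delta_j,b)=1$ we have $\Aut(\zz_{\delta_j}\oplus\zz_b)=\Aut(\zz_{\delta_j})\times\Aut(\zz_b)$, so the $\zz$-action is diagonal: a generator acts by multiplication by $c_a^j$ on $\zz_{\delta_j}$ and by $c_b^j$ on $\zz_b$. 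Taking invariants summand-wise yields
\[
H^{2j}(F;\zz)^{\zz}\cong\zz_{\gcd(c_a^j-1,\,\delta_j)}\oplus\zz_{\gcd(c_b^j-1,\,b)}=\zz_{A_j}\oplus\zz_{B_j},
\]
and for the coinvariants I would apply, to each summand, the elementary fact recalled just before the statement that the invariants and the coinvariants of a cyclic group acting on a finite cyclic group are isomorphic; hence $H^{2j}(F;\zz)_{\zz}\cong H^{2j}(F;\zz)^{\zz}\cong\zz_{A_j}\oplus\zz_{B_j}$ as well. Thus $H^{2j+1}(G;\zz)\cong H^{2j}(G;\zz)\cong\zz_{A_j}\oplus\zz_{B_j}$, which is the asserted formula.

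Essentially the whole argument is bookkeeping once the Lemma, Proposition~\ref{adit} and the preceding action computation are available; the one step that genuinely needs care --- and which the discussion before the statement is designed to supply --- is the verification that the decomposition of $H^{2j}(F;\zz)$ into its two Lyndon--Hochschild--Serre pieces is compatible with the $\zz$-action and that the induced action on each piece is precisely multiplication by $c_a^j$, respectively $c_b^j$. In the proof itself I would merely cite that discussion and then carry out the $\gcd$ computations. One may also note that $\gcd(A_j,B_j)=1$, since $A_j\mid a$ and $B_j\mid b$ with $\gcd(a,b)=1$, so the direct sum could equivalently be written as the cyclic group $\zz_{A_jB_j}$; the split form is the one convenient for the subsequent analysis of the ring structure.
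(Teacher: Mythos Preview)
Your proposal is correct and follows essentially the same route as the paper: the paper's proof simply cites Corollary~\ref{cor:FZ} (which is just the Lemma together with the vanishing of odd cohomology), Proposition~\ref{adit}, and the preceding discussion of the $\zz$-action, exactly as you do. Your write-up is more detailed, but the logical content is the same.
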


\begin{proof}  From   Corollary~\ref{cor:FZ} we have
\[
H^n(G;\zz) = \begin{cases}
H^n(\zz_a\rtimes_\alpha\zz_b; \zz)^\zz, & \text{if $n$ is even,} \\
H^{n-1}(\zz_a\rtimes_\alpha\zz_b; \zz)_\zz, & \text{if $n$ is odd.} \\
\end{cases}
\]
Using the Proposition \ref{adit} and the notation introduced above,
the result follows.
\end{proof}

In order to determine the multiplicative structure in $H^*(G;\zz)$, we
need representatives for the generating classes of each cohomology
group $H^n(G;\zz)$. For $n=0$, let us denote the generating class by
$1$. For $n=1$, a representative for the generating class of
\[
H^1(\zz; H^0(\zz_a\rtimes_\alpha\zz_b; \zz)) \cong \zz
\]
is the map $\eta\colon \zz[\zz_a\rtimes_\alpha\zz_b] \to \zz$ of
$\zz[\zz_a\rtimes_\alpha\zz_b]$-modules given by $\eta(1)=1$. More
precisely, if $\tilde\eta\colon \zz[\zz] \to
H^0(\zz_a\rtimes_\alpha\zz_b; \zz)$ is the $\zz[\zz]$-homomorphism
defined by $\tilde\eta(1) = [\eta]$, then $[\tilde\eta]$ generates
$H^1(\zz; H^0(\zz_a\rtimes_\alpha\zz_b; \zz))$ and $[\eta]$ is the
identity in $H^*(\zz_a\rtimes_\alpha\zz_b; \zz)$.

For $n\ge 2$, we need representatives for the generating classes of
the following groups (where $i>0$):
\begin{enumerate}
\item $H^0(\zz; H^0(\zz_b; H^{2i}(\zz_a; \zz))) = H^0(\zz_b;
  H^{2i}(\zz_a; \zz))^\zz$
\item $H^0(\zz; H^{21i}(\zz_b; H^0(\zz_a; \zz))) = H^{2i}(\zz_b;
  H^0(\zz_a; \zz))^\zz$
\item $H^1(\zz; H^0(\zz_b; H^{2i}(\zz_a; \zz))) = H^0(\zz_b;
  H^{2i}(\zz_a; \zz))_\zz$
\item $H^1(\zz; H^{2i}(\zz_b; H^0(\zz_a; \zz))) = H^{2i}(\zz_b;
  H^0(\zz_a; \zz))_\zz$
\end{enumerate}

We start with $H^0(\zz; H^0(\zz_b; H^{2i}(\zz_a; \zz))) = H^0(\zz_b;
H^{2i}(\zz_a; \zz))^\zz$. We know that $\zz_b$ acts on $H^{2i}(\zz_a;
\zz)$ by multiplication by $r^i$, so $H^0(\zz_b; H^{2i}(\zz_a;
\zz))^\zz \cong (\zz_{\delta_i})^\zz$, where
\begin{align*}
\zz_{\delta_i} &= \{\ov{x} \in \zz_a \::\: r^ix \equiv x \pmod{a}\} \\
&= \{\ov{x} \in \zz_a \::\: x\equiv 0 \pmod{a/\delta_i}\} \\
&= \{\ov{x} \in \zz_a \::\: x = (ka)/\delta_i, \: k\in\zz\}.
\end{align*}
Since $\zz$ acts on $\zz_{\delta_i}$ by multiplication by $c_a^i$, we
obtain
\[
(\zz_{\delta_i})^\zz = \{\ov{x} \in \zz_{\delta_i} \::\:
  (c_a^i-1)x\equiv 0 \pmod{a}\},
\]
and the fact that $x = (ka)/\delta_i$ yields the equation
$\dfrac{(c_a^i-1)ka}{\delta_i} \equiv 0 \pmod{a}$, which has the solution
\[
k \equiv 0 \left(\bmod \:\gcd\left(a,
\dfrac{a(c_a^i-1)}{\delta_i}\right)\right) \Iff k\equiv 0 \left(\bmod
\: \dfrac{a}{\delta_i}\gcd(\delta_i, c_a^i-1)\right).
\]
Hence a representative for the generating class of
\[
H^0(\zz; H^0(\zz_b; H^{2i}(\zz_a; \zz))) \cong (\zz_{\delta_i})^\zz
\]
is the map $u_a^i\colon \zz[\zz_a] \to \zz$ given by
\[
u_a^i(1) = \frac{a}{\delta_i}\cdot\frac{\delta_i}{\gcd(\delta_i,
  (c_a^i-1))} = \frac{a}{\gcd(\delta_i, c_a^i-1)}.
\]
More precisely, if $\varphi_a^i\colon \zz[\zz_b]\to H^{2i}(\zz_a,\zz)$
is the $\zz[\zz_b]$-homomorphism given by $\varphi_a^i(1) = [u_a^i]$
and if $\tilde{\varphi}_a^i\colon \zz[\zz] \to H^0(\zz_b,
H^{2i}(\zz_a,\zz))$ is the $\zz[\zz]$-homomorphism given by
$\tilde{\varphi}_a^i(1) = [\varphi_a^i]$, then $[\tilde{\varphi}_a^i]$
is a generating class for
\[
H^0(\zz; H^0(\zz_b; H^{2i}(\zz_a; \zz))) = H^0(\zz_b; H^{2i}(\zz_a;
\zz))^\zz \cong (\zz_{\delta_i})^\zz.
\]

Similarly, one representative for the generating class of
\[
H^0(\zz; H^{2i}(\zz_b; H^0(\zz_a; \zz))) = H^{2i}(\zz_b; H^0(\zz_a;
\zz))^\zz \cong (\zz_b)^\zz
\]
is the map $u_b^i\colon \zz[\zz_a] \to \zz$ given by
\[
u_b^i(1) = \frac{b}{\gcd(b,c_b^i-1)},
\]
and we define $\varphi_b^i\colon \zz[\zz_b]\to H^0(\zz_a;\zz)$ and
$\tilde{\varphi}_b^i\colon \zz[\zz] \to H^{2i}(\zz_b; H^0(\zz_a;\zz))$
in a way similar to $\varphi_a^i$ and $\tilde{\varphi}_a^i$.

The same reasoning applies to determine a representative for the
generating class of
\[
H^1(\zz; H^0(\zz_b; H^{2i}(\zz_a; \zz))) = H^0(\zz_b;
H^{2i}(\zz_a; \zz))_\zz \cong (\zz_{\delta_i})_\zz,
\]
this representative being the map $v_a^i\colon \zz_a\to \zz$ defined
by
\[
v_a^i(1) = \frac{a}{\delta_i}.
\]
Finally, a representative for the generating class of
\[
H^1(\zz; H^{2i}(\zz_b; H^0(\zz_a; \zz))) = H^{2i}(\zz_b; H^0(\zz_a;
\zz))_\zz = (\zz_b)_\zz
\]
is the map $v_b^i\colon \zz[\zz_a] \to \zz$ given by
\[
v_b^i(1) = 1.
\]
The maps $\psi_a^i$, $\tilde{\psi}_a^i$, $\psi_b^i$ and
$\tilde{\psi}_b^i$ are defined similarly to $\varphi_a^i$,
$\tilde{\varphi}_a^i$, $\varphi_b^i$ and $\tilde{\varphi}_b^i$.

\bigskip
The last necessary ingredients we need to calculate the cup products
in $H^*(G,\zz)$ are the diagonal approximations for the known free
resolutions of $\zz$ over $\zz[\zz]$ and over $\zz[\zz_m]$ for $m>1$.

For $\zz= \gen{s}$, a free resolution $F$ of $\zz$ over $\zz G$ is
given by
\[
\begin{tikzcd}
0 \arrow{r} & F_1 \arrow{r}{s-1} & F_0 \arrow{r}{\varepsilon} & \zz \arrow{r} & 0,
\end{tikzcd}
\]
where $F_1=F_0=\zz[\zz]$,%
%
and a direct verification shows that $\Delta_n\colon F_n \to (F\otimes
F)_n$ defined by
\begin{align}\label{eq:diagonalz}
\Delta_0 \colon F_0 &\to F_0\otimes F_0 \notag\\
\Delta_0(1) &= 1\otimes 1, \notag\\
\Delta_1 \colon F_1 &\to (F_1\otimes F_0)\oplus (F_0\otimes F_1) \notag\\
\Delta_1(1) &= \underbrace{1\otimes s}_{F_1\otimes F_0} + \underbrace{1\otimes 1}_{F_0\otimes F_1}
\end{align}
is a diagonal approximation for the resolution $F$.

For the cyclic group $\zz_m = \gen{t \mid t^m=1}$, a free resolution
$P$ of $\zz$ over $\zz[\zz_m]$ and a diagonal approximation for $P$
are found in~\cite{Brown82}. The resolution $P$ is given by
\[
\begin{tikzcd}
\cdots \arrow{r}{N} & \zz[\zz_m] \arrow{r}{t-1} & \zz[\zz_m] \arrow{r}{N}
& \zz[\zz_m] \arrow{r}{t-1} & \zz[\zz_m] \arrow{r}{\varepsilon} & \zz
\arrow{r} & 0,
\end{tikzcd}
\]
where $P_n = \zz[\zz_m]$ for all $n\ge 0$ and
$N=1+t+\cdots+t^{m-1}$. A diagonal approximation $\Delta$ for $P$ has
components $\Delta_{pq}\colon P_{p+q} \to P_p\otimes P_q$ given by
\begin{equation}\label{eq:diagonalzm}
\Delta_{pq}(1) =
\begin{cases}
1\otimes 1, & \text{if $p$ is even}, \\
1\otimes t, & \text{if $p$ is odd, $q$ is even}, \\
\ds\sum_{0\le i<j \le m-1} t^i\otimes t^j,  & \text{if $p$ and $q$ are odd}. \\
\end{cases}
\end{equation}

\bigskip
Now let $i,j>0$. We begin by considering the product
\begin{align*}
H^0(\zz; H^0(\zz_b; H^{2i}(\zz_a; \zz))) &\otimes H^0(\zz; H^0(\zz_b; H^{2j}(\zz_a; \zz))) \to \\
\to& H^0(\zz; H^0(\zz_b; H^{2i}(\zz_a; \zz))\otimes H^0(\zz_b; H^{2j}(\zz_a; \zz))) \to \\
\to& H^0(\zz; H^0(\zz_b; H^{2i}(\zz_a; \zz)\otimes H^{2j}(\zz_a; \zz))) \to \\
\to& H^0(\zz; H^0(\zz_b; H^{2(i+j)}(\zz_a; \zz))),
\end{align*}
where in the above composition we evaluate the cup products in
$H^*(\zz,\underline{\phantom{M}})$,
$H^*(\zz_b,\underline{\phantom{M}})$ and
$H^*(\zz_a,\underline{\phantom{M}})$, and make the identification
$\zz\otimes\zz \cong \zz$. Using the diagonal approximations
~(\ref{eq:diagonalz}) and~(\ref{eq:diagonalzm}), we obtain
\begin{align*}
(\tilde{\varphi}_a^i \smile\tilde{\varphi}_a^j)(1) &= \tilde{\varphi}_a^i(1) \otimes \tilde{\varphi}_a^j(1) = [\varphi_a^i]\otimes[\varphi_a^j], \\
(\varphi_a^i \smile \varphi_a^j)(1) &= \varphi_a^i(1)\otimes \varphi_a^j(1) = [u_a^i]\otimes [u_a^j], \\
(u_a^i\smile u_a^j)(1) &= u_a^i(1)\otimes u_a^j(1) \mapsto \frac{a^2}{\gcd(\delta_i, c_a^i-1)\gcd(\delta_j,c_a^j-1)},
\end{align*}
from where it follows that
\[
[\tilde{\varphi}_a^i]\smile[\tilde{\varphi}_a^j] = \frac{a\gcd(\delta_{i+j},c_a^{i+j}-1)}{\gcd(\delta_i, c_a^i-1)\gcd(\delta_j,c_a^j-1)}[\tilde{\varphi}_a^{i+j}].
\]

Similarly, the product
\begin{align*}
H^0(\zz; H^{2i}(\zz_b; H^0(\zz_a; \zz))) &\otimes H^0(\zz; H^{2j}(\zz_b; H^0(\zz_a; \zz))) \to \\
\to& H^0(\zz; H^{2i}(\zz_b; H^0(\zz_a; \zz))\otimes H^{2j}(\zz_b; H^0(\zz_a; \zz))) \to \\
\to& H^0(\zz; H^{2(i+j)}(\zz_b; H^0(\zz_a; \zz)\otimes H^0(\zz_a; \zz))) \to \\
\to& H^0(\zz; H^{2(i+j)}(\zz_b; H^0(\zz_a; \zz)))
\end{align*}
is such that
\[
[\tilde{\varphi}_b^i]\smile[\tilde{\varphi}_b^j] = \frac{b\gcd(b,c_b^{i+j}-1)}{\gcd(b, c_b^i-1)\gcd(b,c_b^j-1)}[\tilde{\varphi}_b^{i+j}].
\]

As to the product
\begin{align*}
H^0(\zz; H^0(\zz_b; H^{2i}(\zz_a; \zz))) &\otimes H^0(\zz; H^{2j}(\zz_b; H^0(\zz_a; \zz))) \to \\
\to& H^0(\zz; H^0(\zz_b; H^{2i}(\zz_a; \zz))\otimes H^{2j}(\zz_b; H^0(\zz_a; \zz))) \to \\
\to& H^0(\zz; H^{2j}(\zz_b; H^{2i}(\zz_a; \zz)\otimes H^0(\zz_a; \zz))) \to \\
\to& H^0(\zz; H^{2j}(\zz_b; H^{2i}(\zz_a; \zz))) = 0,
\end{align*}
we can only have
\[
[\tilde{\varphi}_a^i]\smile[\tilde{\varphi}_b^j] = 0.
\]
Similarly, the compositions
\begin{align*}
H^0(\zz; H^0(\zz_b; H^{2i}(\zz_a; \zz))) &\otimes H^1(\zz; H^{2j}(\zz_b; H^0(\zz_a; \zz))) \to \\
\to& H^1(\zz; H^0(\zz_b; H^{2i}(\zz_a; \zz)) \otimes H^{2j}(\zz_b; H^0(\zz_a; \zz))) \to \\
\to& H^1(\zz; H^{2j}(\zz_b; H^{2i}(\zz_a; \zz) \otimes H^0(\zz_a; \zz))) \to \\
\to& H^1(\zz; H^{2j}(\zz_b; H^{2i}(\zz_a; \zz))) = 0
\end{align*}
and
\begin{align*}
H^0(\zz; H^{2i}(\zz_b; H^0(\zz_a; \zz))) &\otimes H^1(\zz; H^0(\zz_b; H^{2j}(\zz_a; \zz))) \to \\
\to& H^1(\zz; H^{2i}(\zz_b; H^0(\zz_a; \zz)) \otimes H^0(\zz_b; H^{2j}(\zz_a; \zz))) \to \\
\to& H^1(\zz; H^{2i}(\zz_b; H^0(\zz_a; \zz) \otimes H^{2j}(\zz_a; \zz))) \to \\
\to& H^1(\zz; H^{2i}(\zz_b; H^{2j}(\zz_a; \zz))) = 0
\end{align*}
show us that
\[
[\tilde{\varphi}_a^i]\smile[\tilde{\psi}_b^j] = 0, \quad [\tilde{\varphi}_b^i]\smile[\tilde{\psi}_a^j] = 0.
\]

The product
\begin{align*}
H^0(\zz; H^0(\zz_b; H^{2i}(\zz_a; \zz))) &\otimes H^1(\zz; H^0(\zz_b; H^{2j}(\zz_a; \zz))) \to \\
\to& H^1(\zz; H^0(\zz_b; H^{2i}(\zz_a; \zz)) \otimes H^0(\zz_b; H^{2j}(\zz_a; \zz))) \to \\
\to& H^1(\zz; H^0(\zz_b; H^{2i}(\zz_a; \zz) \otimes H^{2j}(\zz_a; \zz))) \to \\
\to& H^1(\zz; H^0(\zz_b; H^{2(i+j)}(\zz_a; \zz)))
\end{align*}
is such that
\begin{align*}
(\tilde{\varphi}_a^i \smile\tilde{\psi}_a^j)(1) &= \tilde{\varphi}_a^i(1) \otimes \tilde{\psi}_a^j(1) = [\varphi_a^i]\otimes[\psi_a^j], \\
(\varphi_a^i \smile \psi_a^j)(1) &= \varphi_a^i(1)\otimes \psi_a^j(1) = [u_a^i]\otimes [v_a^j], \\
(u_a^i\smile v_a^j)(1) &= u_a^i(1)\otimes v_a^j(1) \mapsto \frac{a^2}{\delta_i\gcd(\delta_i, c_a^i-1)},
\end{align*}
hence
\[
[\tilde{\varphi}_a^i]\smile [\tilde{\psi}_a^j] = \frac{a\delta_{i+j}}{\delta_j\gcd(\delta_i,c_a^i-1)}[\tilde{\psi}_a^{i+j}].
\]

Similarly, the product
\begin{align*}
H^0(\zz; H^{2i}(\zz_b; H^0(\zz_a; \zz))) &\otimes H^1(\zz; H^{2j}(\zz_b; H^0(\zz_a; \zz))) \to \\
\to& H^1(\zz; H^{2i}(\zz_b; H^0(\zz_a; \zz)) \otimes H^{2j}(\zz_b; H^0(\zz_a; \zz))) \to \\
\to& H^1(\zz; H^{2(i+j)}(\zz_b; H^0(\zz_a; \zz) \otimes H^0(\zz_a; \zz))) \to \\
\to& H^1(\zz; H^{2(i+j)}(\zz_b; H^0(\zz_a; \zz)))
\end{align*}
is such that
\[
[\tilde{\varphi}_b^i]\smile [\tilde{\psi}_b^j] = \frac{b}{\gcd(b,c_b^i-1)}[\tilde{\psi}_b^{i+j}].
\]
Finally, observing that $[\eta]$ is the identity in
$H^*(\zz_a\rtimes_\alpha \zz_b;\zz)$, we also have
\[
[\tilde{\varphi}_a^i] \smile [\tilde{\eta}] = \frac{\delta_i}{\gcd(\delta_i,c_a^i-1)}[\tilde{\psi}_a^i]
\]
and
\[
[\tilde{\varphi}_b^i] \smile [\tilde{\eta}] = \frac{b}{\gcd(b,c_b^i-1)}[\tilde{\psi}_b^i].
\]

Therefore, we have the following result:

\begin{theorem}\label{theorem:zazbzring}
Let $G=(\zz_a\rtimes_\alpha \zz_b)\rtimes_\theta \zz$, where
$\gcd(a,b)=1$, and with $\alpha$ and $\theta$ defined by
\begin{align*}
\alpha\colon &\zz_b \to \Aut(\zz_a) \\
&\alpha(1_b)(1_a) = r\cdot 1_a
\end{align*}
and
\begin{align*}
\theta\colon &\zz \to \Aut(\zz_a\rtimes_\alpha \zz_b)\\
&\theta(1)(1_a) = c_a\cdot 1_a,\\
&\theta(1)(1_b) = c\cdot 1_a + c_b\cdot 1_b.
\end{align*}
The cohomology groups $H^*(G;\zz)$ are given by
\[
H^n(G;\zz) \cong \begin{cases}
\zz, & \text{if $n=0$ or $n=1$}, \\
\zz_{A_j}\oplus \zz_{B_j}, & \text{if $n=2j$ or $n=2j+1$ (and $j>0$)},
\end{cases}
\]
where $A_j = \gcd(c_a^j-1,\delta_j)$, $\delta_j = \gcd(r^j-1,a)$ and
$B_j = \gcd(c_b^i-1,b)$. Moreover, there is a generator
$[\tilde{\eta}]$ of $H^1(G,\zz)$, there are generators
$[\tilde{\varphi}_a^i]$ and $[\tilde{\varphi}_b^i]$ of $H^{2i}(G;\zz)$
for $i>0$ such that $\gen{[\tilde{\varphi}_a^i]} \cong \zz_{A_i}$ and
$\gen{[\tilde{\varphi}_b^i]} \cong \zz_{B_i}$, and there are
generators $[\tilde{\psi}_a^i]$ and $[\tilde{\psi}_b^i]$ of
$H^{2i+1}(G;\zz)$ for $i>0$ such that $\gen{[\tilde{\psi}_a^i]} \cong
\zz_{A_i}$ and $\gen{[\tilde{\psi}_b^i]} \cong \zz_{B_i}$ for which we
have
\begin{align*}
&[\tilde{\varphi}_a^i] \smile [\tilde{\eta}] = \frac{\delta_i}{\gcd(\delta_i,c_a^i-1)}[\tilde{\psi}_a^i], \\
&[\tilde{\varphi}_b^i] \smile [\tilde{\eta}] = \frac{b}{\gcd(b,c_b^i-1)}[\tilde{\psi}_b^i], \\
&[\tilde{\psi_a^i}] \smile [\tilde{\eta}] = 0, \\
&[\tilde{\psi_b^i}] \smile [\tilde{\eta}] = 0, \\
&[\tilde{\varphi}_a^i] \smile [\tilde{\varphi}_a^j] = \frac{a\gcd(\delta_{i+j},c_a^{i+j}-1)}{\gcd(\delta_i, c_a^i-1)\gcd(\delta_j,c_a^j-1)}[\tilde{\varphi}_a^{i+j}], \\
&[\tilde{\varphi}_b^i] \smile [\tilde{\varphi}_b^j] = \frac{b\gcd(b,c_b^{i+j}-1)}{\gcd(b, c_b^i-1)\gcd(b,c_b^j-1)}[\tilde{\varphi}_b^{i+j}], \\
&[\tilde{\varphi}_a^i] \smile [\tilde{\varphi}_b^j] = 0, \\
&[\tilde{\varphi}_a^i] \smile [\tilde{\psi}_a^j] = \frac{a\delta_{i+j}}{\delta_j\gcd(\delta_i,c_a^i-1)}[\tilde{\psi}_a^{i+j}], \\
&[\tilde{\varphi}_b^i] \smile [\tilde{\psi}_b^j] = \frac{b}{\gcd(b,c_b^i-1)}[\tilde{\psi}_b^{i+j}], \\
&[\tilde{\varphi}_a^i] \smile [\tilde{\psi}_b^j] = 0, \\
&[\tilde{\varphi}_b^i] \smile [\tilde{\psi}_a^j] = 0, \\
&[\tilde{\psi}_a^i] \smile [\tilde{\psi}_a^j] = 0, \\
&[\tilde{\psi}_a^i] \smile [\tilde{\psi}_b^j] = 0, \\
&[\tilde{\psi}_b^i] \smile [\tilde{\psi}_b^j] = 0.
\end{align*}
Finally, if we let $d$, $d_{c_a}$ and $d_{c_b}$ be the orders of $r$,
$c_a$ and $c_b$ in $\U(\zz_a)$, respectively, and $p =
\lcm(d,d_{c_a},d_{c_b})$, then
\[
([\tilde{\varphi}_a^p] + [\tilde{\varphi}_b^p]) \smile \underline{\phantom{M}}\colon H^n(G;\zz) \to H^{n+2p}(G;\zz)
\]
is an isomorphism for all $n\ge 2$.
\end{theorem}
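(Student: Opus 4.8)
The plan is to assemble the theorem from four essentially independent pieces: the additive structure of $H^*(G;\zz)$, the explicit generators together with their orders, the complete list of cup products, and finally the periodicity class. I would begin with the groups. The finite group $F=\zz_a\rtimes_\alpha\zz_b$ has all Sylow subgroups cyclic (each lies inside $\zz_a$ or $\zz_b$ because $\gcd(a,b)=1$), so it has periodic cohomology and Corollary~\ref{cor:FZ} applies, giving $H^n(G;\zz)\cong H^n(F;\zz)^\zz$ for $n$ even and $H^n(G;\zz)\cong H^{n-1}(F;\zz)_\zz$ for $n$ odd. Feeding in the refinement of Proposition~\ref{adit}, namely $H^{2j}(F;\zz)\cong H^0(\zz_b;H^{2j}(\zz_a;\zz))\oplus H^{2j}(\zz_b;H^0(\zz_a;\zz))\cong\zz_{\delta_j}\oplus\zz_b$, I would compute invariants and coinvariants of the $\zz$-action, which is multiplication by $c_a^j$ on $\zz_{\delta_j}$ and by $c_b^j$ on $\zz_b$. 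For a cyclic group acting on $\zz_k$ by multiplication by $c$, both $(\zz_k)^\zz$ and $(\zz_k)_\zz$ are cyclic of order $\gcd(k,c-1)$, which produces $\zz_{A_j}\oplus\zz_{B_j}$ in both the even and the odd case; since $H^{\mathrm{odd}}(F;\zz)=0$, the two-term filtration from the opening Lemma degenerates with no extension problem, so the groups split as stated.

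Second, for the generators I would exhibit explicit cocycles in the standard free resolutions and lift them through the nested spectral sequences. The class $[\tilde\eta]$ comes from the degree-one generator of $H^1(\zz;\zz)$; the invariant generators $[\tilde\varphi_a^i],[\tilde\varphi_b^i]$ are lifts of the maps $u_a^i,u_b^i$, whose values $a/\gcd(\delta_i,c_a^i-1)$ and $b/\gcd(b,c_b^i-1)$ are forced by solving the invariance congruence inside $\zz_{\delta_i}$ and $\zz_b$; and the coinvariant generators $[\tilde\psi_a^i],[\tilde\psi_b^i]$ are the analogous lifts of $v_a^i,v_b^i$. Identifying each lifted class with the generator of the corresponding invariant or coinvariant cyclic summand gives the orders $\gen{[\tilde\varphi_a^i]}\cong\zz_{A_i}$, and likewise for the other three families.

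Third, for the products I would use that the $G$-spectral sequence collapses, since $E_2^{p,q}=0$ for $p\ge2$ or $q$ odd, so $E_2=E_\infty$ and the algebra product computes the cup product through \eqref{eq:cupE2}; I would then evaluate each product of representatives with the diagonal approximations \eqref{eq:diagonalz} and \eqref{eq:diagonalzm}. Products landing in a group of the form $H^p(\zz_b;H^q(\zz_a;\zz))$ with $p,q>0$ vanish because that group is zero, which accounts for every zero in the list. For the nonzero products I would multiply the chosen cochains, for instance $(u_a^i\smile u_a^j)(1)=u_a^i(1)u_a^j(1)$ with the relevant $\Delta_{pq}$ equal to $1\otimes1$ or $1\otimes t$ in the even degrees, and re-express the resulting value as an integer multiple of the value of the target generator; that multiple is exactly the structure constant, e.g. $a\gcd(\delta_{i+j},c_a^{i+j}-1)/[\gcd(\delta_i,c_a^i-1)\gcd(\delta_j,c_a^j-1)]$. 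I expect this to be the main obstacle: one must track the bookkeeping of the double spectral sequence and the sign $(-1)^{pq'}$ correctly, confirm that the representatives are genuine cocycles, and verify that each structure constant is a well-defined integer modulo the order of the target class.

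Finally, for periodicity I set $\omega_p=[\tilde\varphi_a^p]+[\tilde\varphi_b^p]$ with $p=\lcm(d,d_{c_a},d_{c_b})$. Since $d\mid p$ gives $r^p\equiv1\pmod a$, hence $\delta_p=a$, while $d_{c_a}\mid p$ and $d_{c_b}\mid p$ give $c_a^p\equiv1\pmod a$ and $c_b^p\equiv1\pmod b$, I obtain $A_p=a$ and $B_p=b$, so $\omega_p$ generates $H^{2p}(G;\zz)\cong\zz_a\oplus\zz_b$. The same congruences yield $\delta_{j+p}=\delta_j$, $A_{j+p}=A_j$ and $B_{j+p}=B_j$, and substituting $i=p$ into the product formulas collapses every structure constant to $1$: $\omega_p\smile[\tilde\varphi_a^j]=[\tilde\varphi_a^{j+p}]$, $\omega_p\smile[\tilde\varphi_b^j]=[\tilde\varphi_b^{j+p}]$, and likewise $\omega_p\smile[\tilde\psi_a^j]=[\tilde\psi_a^{j+p}]$ and $\omega_p\smile[\tilde\psi_b^j]=[\tilde\psi_b^{j+p}]$, while the cross terms vanish. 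Hence for every $n\ge2$ the map $\omega_p\smile\underline{\phantom{M}}$ carries each cyclic summand of $H^n(G;\zz)$ onto the corresponding cyclic summand of $H^{n+2p}(G;\zz)$ of equal order by a generator-to-generator correspondence, so it is an isomorphism.
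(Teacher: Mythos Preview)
Your proposal is correct and follows essentially the same route as the paper: the additive computation via Corollary~\ref{cor:FZ} and Proposition~\ref{adit}, the explicit invariant/coinvariant generators $u_a^i,u_b^i,v_a^i,v_b^i$ and their lifts, the cup-product calculation through the collapsed spectral sequence using the diagonal approximations~\eqref{eq:diagonalz} and~\eqref{eq:diagonalzm}, and the periodicity argument by specializing the structure constants at $i=p$ are exactly what the paper does. The only cosmetic difference is that the paper places the first three pieces in the text preceding the theorem and reserves the formal proof environment for the periodicity step alone, whereas you outline all four pieces as a single argument.
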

\begin{proof}
We only need to prove the statement concerning the periodicity of
$H^*(G;\zz)$. Noticing that $r^p \equiv 1 \pmod{a}$, $c_a^p\equiv 1
\pmod{a}$ and $c_b^p \equiv 1 \pmod{b}$, we have
\begin{align*}
\delta_p &= a, \\
\delta_{p+j} &= \delta_j, \\
\gcd(\delta_{p+j}, c_a^{p+j}-1) &= \gcd(\delta_j, c_a^j-1), \\
\gcd(b, c_b^{p+j}-1) &= \gcd(b, c_b^j-1).
\end{align*}
Therefore, using the formulas we already have for the cup products
show us that, for all $n\ge 2$,
\begin{align*}
& ([\tilde{\varphi}_a^p] + [\tilde{\varphi}_b^p])\smile [\tilde{\varphi}_a^j] = [\tilde{\varphi}_a^{p+j}], \\
& ([\tilde{\varphi}_a^p] + [\tilde{\varphi}_b^p])\smile [\tilde{\varphi}_b^j] = [\tilde{\varphi}_b^{p+j}], \\
& ([\tilde{\varphi}_a^p] + [\tilde{\varphi}_b^p])\smile [\tilde{\psi}_a^j] = [\tilde{\psi}_a^{p+j}], \\
& ([\tilde{\varphi}_a^p] + [\tilde{\varphi}_b^p])\smile [\tilde{\psi}_b^j] = [\tilde{\psi}_b^{p+j}], \\
\end{align*}
and that proves that
\[
([\tilde{\varphi}_a^p] + [\tilde{\varphi}_b^p]) \smile \underline{\phantom{M}}\colon H^n(G;\zz) \cong H^{n+2p}(G;\zz)
\]
is an isomorphism for $n\ge 2$.
\end{proof}

\begin{corollary}
If $[\tilde{\varphi}_a^p] + [\tilde{\varphi}_b^p] \in H^{2p}(G;\zz)$
is the cohomology class described in the previous theorem, then
\[
(\widehat{[\tilde{\varphi}_a^p] + [\tilde{\varphi}_b^p]}) \smile \underline{\phantom{M}}\colon \hat{H}^n(G;\zz) \cong \hat{H}^{n+2p}(G;\zz)
\]
is an isomorphism for all $n\in\zz$.
\end{corollary}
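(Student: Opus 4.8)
The plan is to derive this from Theorem~\ref{theorem:zazbzring} together with the standard facts about Farrell cohomology recalled in Section~\ref{prel}. Write $G=F\rtimes_\theta\zz$ with $F=\zz_a\rtimes_\alpha\zz_b$, put $u=[\tilde{\varphi}_a^p]+[\tilde{\varphi}_b^p]\in H^{2p}(G;\zz)$, and let $\hat{u}\in\hat{H}^{2p}(G;\zz)$ denote the image of $u$ under the canonical isomorphism $H^{2p}(G;\zz)\cong\hat{H}^{2p}(G;\zz)$ (so $\hat{u}=\widehat{[\tilde{\varphi}_a^p]+[\tilde{\varphi}_b^p]}$). First I would collect the structural input. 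Since $\gcd(a,b)=1$, every prime divides exactly one of $a$, $b$, so the Sylow subgroups of $F$ are cyclic and $F$ is a finite group with periodic cohomology (as has been used throughout this section). Hence, as noted in Section~\ref{prel}, $G$ has periodic Farrell cohomology; in particular there are an integer $q>0$ and a class $w\in\hat{H}^q(G;\zz)$ which is a unit of the graded ring $\hat{H}^*(G;\zz)$, so that $w\smile\underline{\phantom{M}}$ is an isomorphism in every degree. Finally, $\vcd(G)=1$, so the canonical map $H^n(G;\zz)\to\hat{H}^n(G;\zz)$ is an isomorphism for every $n\ge 2$, and it is compatible with cup products \cite[Section X.3]{Brown82}.

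The main step is then to transport Theorem~\ref{theorem:zazbzring} to Farrell cohomology and bootstrap. In the range $n\ge 2$ the canonical isomorphisms identify $u\smile\underline{\phantom{M}}\colon H^n(G;\zz)\to H^{n+2p}(G;\zz)$ with $\hat{u}\smile\underline{\phantom{M}}\colon \hat{H}^n(G;\zz)\to\hat{H}^{n+2p}(G;\zz)$, and the former is an isomorphism by Theorem~\ref{theorem:zazbzring}; hence $\hat{u}\smile\underline{\phantom{M}}$ is an isomorphism for all $n\ge 2$. For an arbitrary $n\in\zz$, choose $k\ge 0$ with $n+kq\ge 2$. Since $2p$ is even, graded commutativity and associativity of the cup product make the diagram
\[
\begin{tikzcd}
\hat{H}^n(G;\zz) \arrow{r}{\hat{u}\smile{-}} \arrow{d}{\cong} & \hat{H}^{n+2p}(G;\zz) \arrow{d}{\cong} \\
\hat{H}^{n+kq}(G;\zz) \arrow{r}{\hat{u}\smile{-}} & \hat{H}^{n+kq+2p}(G;\zz)
\end{tikzcd}
\]
commute, where the vertical arrows are cup product with $w^k$. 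The vertical arrows are isomorphisms because $w$ is a unit, and the bottom arrow is an isomorphism by the previous sentence, so the top arrow is an isomorphism as well. This proves the statement for all $n\in\zz$.

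The one point that needs care is precisely this last shift: Theorem~\ref{theorem:zazbzring} only provides the periodicity isomorphism in degrees $n\ge 2$, which is exactly the range in which ordinary and Farrell cohomology coincide, so one has to push the statement into the low degrees, and multiplying by a suitable power of the periodicity unit $w$ is the natural device for doing so; everything else is bookkeeping with Section~\ref{prel}. As a byproduct, the surjectivity of $\hat{u}\smile\underline{\phantom{M}}\colon\hat{H}^{-2p}(G;\zz)\to\hat{H}^0(G;\zz)$ together with graded commutativity (and the fact that $2p$ is even) shows that $\hat{u}$ is a unit of $\hat{H}^*(G;\zz)$, so the same isomorphism in fact holds with coefficients in an arbitrary $G$-module $M$ via the $\hat{H}^*(G;\zz)$-module structure of $\hat{H}^*(G;M)$; this also re-proves the corollary through the last Theorem of Section~\ref{prel}.
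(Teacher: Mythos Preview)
Your argument is correct and follows essentially the same route the paper intends: the Corollary is meant to be an immediate application of the last Theorem of Section~\ref{prel}, whose content is precisely the degree-shifting argument you spell out (periodicity of $\hat{H}^*(G;\zz)$ gives a unit $w$, ordinary and Farrell cohomology agree above $\vcd(G)=1$, and the cup products are compatible). You have simply unpacked that theorem rather than citing it, and your final remark that $\hat u$ is itself a unit neatly closes the apparent gap between the ``all modules $M$'' hypothesis of that theorem and the $\zz$-coefficient statement of Theorem~\ref{theorem:zazbzring}.
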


\section{The cohomology ring of $[\zz_a\rtimes(\zz_b\times Q_{2^i})]\rtimes \zz$}\label{zazbq2i}

Let $i\ge 3$ be an integer and $Q_{2^i} = \langle x,y \:\mid\:
x^{2^{i-2}} = y^2,\: xyx=y\rangle$. The group $G =
[\zz_a\rtimes(\zz_b\times Q_{2^i})]\rtimes \zz$ for $\gcd(a,b) =
\gcd(ab,2) =1$ can be obtained as a sequence of extensions, and we
will follow through this sequence in order to calculate its integral
cohomology ring.

First, we note that the integral cohomology ring of the groups
$Q_{2^i}$ (actually, $Q_{4n}$) has been determined
in~\cite{TomodaPeter}:

\begin{theorem}[Tomoda and Zvengrowski,~\cite{TomodaPeter}]\label{theorem:TomodaPeter}
The cohomology ring $H^*(Q_{2^i};\zz)$ has the following presentation
for $i\ge 3$:
\[
  H^*(Q_{2^i}; \zz) = \begin{cases}
    \dfrac{\zz[\gamma_2,\gamma_2',\delta_4]}
          {\begin{pmatrix}
              2\gamma_2 = 2\gamma_2' = 2^i\delta_4 = 0\\
              \gamma_2^2 = 0, \gamma_2'^2 = \gamma_2\gamma_2' = 2^{i-1}\delta_4
          \end{pmatrix}}, & \text{if $i>3$;}\\
    \dfrac{\zz[\gamma_2,\gamma_2',\delta_4]}
          {\begin{pmatrix}
              2\gamma_2 = 2\gamma_2' = 8\delta_4 = 0\\
              \gamma_2^2 = \gamma_2'^2 = 0, \gamma_2\gamma_2' = 4\delta_4
          \end{pmatrix}}, & \text{if $i=3$;}
  \end{cases}
\]
where $\dim(\gamma_2) = \dim(\gamma_2') = 2$, $\dim(\delta_4) = 4$.
\end{theorem}

Let's consider the extension
\[
\begin{tikzcd}
  \phantom{.}1 \arrow{r} & \zz_b \arrow{r} & \zz_b\times Q_{2^i} \arrow{r} & Q_{2^i} \arrow{r} & 1
\end{tikzcd}
\]
and the associated spectral sequence $E$ with
\[
E_2^{p,q} = H^p(Q_{2^i}; H^q(\zz_b;\zz)) \then H^{p+q}(\zz_b\times
Q_{2^i}; \zz).
\]
The group $Q_{2^i}$ acts trivially on $\zz_b$, hence $Q_{2^i}$ also
acts trivially on $H^*(\zz_b;\zz)$. It follows now from~\cite[section
  XII.7]{cartan} and the fact that $b$ is odd that
\[
E_2^{p,q} = \begin{cases}
\zz, & \text{if $p=q=0$;} \\
\zz_b & \text{if $p=0$ and $q\ge 2$ is even;} \\
\zz_2^2, & \text{if $q=0$ and $p\equiv 2\pmod{4}$;} \\
\zz_{2^i}, & \text{if $q=0$ and $p\equiv 0\pmod{4}$ for $p\ge 4$;} \\
0, & \text{otherwise.}
\end{cases}
\]

Therefore $E_\infty^{p,q} = E_2^{p,q}$, we have no extension problems
to consider since $b$ is odd, and the induced product on $E_2$ gives
us the cohomology ring of $\zz_b\times Q_{2^i}$: writing
$H^*(\zz_b;\zz) = \dfrac{\zz[\beta_2]}{(b\beta_2=0)}$ with
$\dim(\beta_2) = 2$, we have $\beta_2\gamma_2 = \beta_2\gamma_2' =
\beta_2\delta_4 = 0$ in $H^*(\zz_b\times Q_{2^i}; \zz)$, plus the
relations among $\gamma_2$, $\gamma_2'$ and $\delta_4$ that already
hold in $H^*(Q_{2^i};\zz)$. Thus we have proved the following:

\begin{theorem}\label{theorem:ringzbq2i}
The cohomology ring $H^*(\zz_b\times Q_{2^i};\zz)$ has the following
presentation for $i\ge 3$:
\[
H^*(\zz_b\times Q_{2^i}; \zz) = \begin{cases}
  \dfrac{\zz[\beta_2,\gamma_2,\gamma_2',\delta_4]}
        {\begin{pmatrix}
            b\beta_2 = 2\gamma_2 = 2\gamma_2' = 2^i\delta_4 = 0\\
            \beta_2\gamma_2 = \beta_2\gamma_2' = \beta_2\delta_4 = 0\\
            \gamma_2^2 = 0, \gamma_2'^2 = \gamma_2\gamma_2' = 2^{i-1}\delta_4
        \end{pmatrix}}, & \text{if $i>3$;}\\
        \dfrac{\zz[\gamma_2,\gamma_2',\delta_4]}
              {\begin{pmatrix}
                  b\beta_2 = 2\gamma_2 = 2\gamma_2' = 8\delta_4 = 0\\
                  \beta_2\gamma_2 = \beta_2\gamma_2' = \beta_2\delta_4 = 0\\
                  \gamma_2^2 = \gamma_2'^2 = 0, \gamma_2\gamma_2' = 4\delta_4
              \end{pmatrix}}, & \text{if $i=3$;}
\end{cases}
\]
where $\dim(\beta_2) = \dim(\gamma_2) = \dim(\gamma_2') = 2$,
$\dim(\delta_4) = 4$.
\end{theorem}

The next step is to consider the extension
\[
\begin{tikzcd}
\phantom{,}1 \arrow{r} & \zz_a \arrow{r} & \zz_a\rtimes_\beta
(\zz_b\times Q_{2^i}) \arrow{r} & \zz_b\times Q_{2^i} \arrow{r} & 1,
\end{tikzcd}
\]
where $\beta\colon \zz_b\times Q_{2^i} \to \Aut(\zz_a)$ is given by
\begin{equation}\label{eq:definebeta}
\begin{array}{l}
  \beta(1_b)(1_a) = r\cdot 1_a,\\
  \beta(1_b)(x) = r_x\cdot 1_a,\\
  \beta(1_b)(y) = r_y\cdot 1_a,
\end{array}
\end{equation}
with $r$, $r_x$ and $r_y$ satisfying $r^b\equiv r_x^2\equiv r_y^2
\pmod{a}$. The Lyndon-Hochschild-Serre sequence associated to the
above extension is such that
\[
E_2^{p,q} = H^p(\zz_b\times Q_{2^i}; H^q(\zz_a;\zz)) \then
H^{p+q}(\zz_a\rtimes_\beta(\zz_b\times Q_{2^i});\zz).
\]
Since the ring $H^*(\zz_a;\zz)$ is generated by an element of
dimension $2$, we have
\begin{equation}\label{eq:e2zazbq2i}
E_2^{p,q} = \begin{cases}
  H^p(\zz_b\times Q_{2^i};\zz), & \text{if $q=0$;} \\
  H^p(\zz_b\times Q_{2^i}; \tilde{\zz}_a), & \text{if $q\ge 2$ is even;} \\
  0, & \text{if $q$ is odd;}
\end{cases}
\end{equation}
where $\tilde{\zz}_a = H^q(\zz_a;\zz)$ represents the $(\zz_b\times
Q_{2^i})$-module $\zz_a$ with the appropriate action of $(\zz_b\times
Q_{2^i})$. For $j>0$ and the given $\beta\colon \zz_b\times Q_{2^i}\to
\Aut(\zz_a)$, the action of $1_b\in\zz_b$ on $H^{2j}(\zz_a;\zz) =
\tilde{\zz}_a$ is the multiplication by $r^j$, the action of $x\in
Q_{2^i}$ is the multiplication by $r_x^j$ and the action of $y\in
Q_{2^i}$ is the multiplication by $r_y^j$. If we now consider the
spectral sequence $\tilde{E}$ associated to the extension
\[
\begin{tikzcd}
  1 \arrow{r} & \zz_b \arrow{r} & \zz_b\times Q_{2^i} \arrow{r} & Q_{2^i} \arrow{r} & 1
\end{tikzcd}
\]
with coefficients in $\tilde{\zz}_a$, we have
\[
\tilde{E}_2^{p,q} = H^p(Q_{2^i}; H^q(\zz_b;\tilde{\zz}_a)) \then
H^{p+q}(\zz_b\times Q_{2^i}; \tilde{\zz}_a),
\]
and from this we actually get
\[
\tilde{E}_2^{p,q} = \begin{cases}
  H^{p}(Q_{2^i};\zz_{\delta_j}), & \text{if $q=0$;} \\
  0, & \text{if $q>0$;}
\end{cases}
\]
with $\delta_j = \gcd(a,r^j-1)$. The spectral sequence therefore
collapses on $\tilde{E}_2$ and, since $\gcd(\delta_j, 2^i)=1$, we have
\begin{equation}\label{eq:zbq2izatilde}
H^p(\zz_b\times Q_{2^i}; \tilde{\zz}_a) = H^p(Q_{2^i}; \zz_{\delta_j}) = \begin{cases}
  \zz_{\varepsilon_j}, & \text{if $p=0$;} \\
  0, & \text{if $p>0$;}
\end{cases}
\end{equation}
where $\varepsilon_j = \gcd(a, r^j-1, r_x^j-1, r_y^j-1)$. Note that
$\varepsilon_j = \delta_j$ when $j$ is even, since $r_x^2\equiv
r_y^2\equiv 1 \pmod{a}$.

Plugging these results back into~(\ref{eq:e2zazbq2i}), we get
$E_\infty^{p,q} = E_2^{p,q}$ and, since $\gcd(a,b) = \gcd(ab,2)=1$,
the following result is then proved:

\begin{theorem}
The integral cohomology groups of $\zz_a\rtimes_\beta(\zz_b\times
Q_{2^i})$, for $\beta\colon \zz_b\times Q_{2^i} \to \Aut(\zz_a)$
defined by
\[
\begin{array}{l}
  \beta(1_b)(1_a) = r\cdot 1_a,\\
  \beta(1_b)(x) = r_x\cdot 1_a,\\
  \beta(1_b)(y) = r_y\cdot 1_a,
\end{array}
\]
are given by
\[
H^n(\zz_a\rtimes_\beta(\zz_b\times Q_{2^i}); \zz) = \begin{cases}
\zz, & \text{if $n=0$;} \\
0, & \text{if $n$ is odd;} \\
\zz_{\varepsilon_{2j+1}}\oplus\zz_b\oplus\zz_2^2, & \text{if $n = 4j+2$;} \\
\zz_{\delta_{2j}}\oplus\zz_b\oplus\zz_{2^i}, & \text{if $n = 4j$ (and $j>0$).} \\
\end{cases}
\]
\end{theorem}

Our work above actually allows us to determine the integral cohomology
ring of $\zz_a\rtimes_\beta(\zz_b\times Q_{2^i})$, and not just its
cohomology groups.

\begin{theorem}\label{theorem:zazbq2iring}
The cohomology ring $H^*(\zz_a\rtimes_\beta(\zz_b\times
Q_{2^i});\zz)$ is generated by the elements
\[
\left(\frac{a}{\varepsilon_j}\right)\alpha_2^j, \: \beta_2, \:
\gamma_2, \: \gamma_2', \text{ and } \delta_4,
\]
where $j>0$, $\varepsilon_j = \gcd(a,r^j-1, r_x^j-1, r_y^j-1)$,
$\dim\left(\dfrac{a}{\varepsilon_j}\alpha_2^j\right) = 2j$,
$\dim(\beta_2) = \dim(\gamma_2) = \dim(\gamma_2') = 2$ and
$\dim(\delta_4) = 4$. We have
$\varepsilon_j\left(\dfrac{a}{\varepsilon_j}\alpha_2^j\right)=0$, the
relations among $\beta_2$, $\gamma_2$, $\gamma_2'$ and $\delta_4$ are
exactly the ones described in Theorem~\ref{theorem:ringzbq2i}, and
\begin{gather*}
\left(\frac{a}{\varepsilon_i}\alpha_2^i\right) \smile
\left(\frac{a}{\varepsilon_j}\alpha_2^j\right) =
\frac{a\varepsilon_{i+j}}{\varepsilon_i\varepsilon_j}\left(\frac{a}{\varepsilon_{i+j}}\alpha_2^{i+j}\right),\\
\left(\frac{a}{\varepsilon_i}\alpha_2^i\right) \smile \beta_2 =
\left(\frac{a}{\varepsilon_i}\alpha_2^i\right) \smile \gamma_2 =
\left(\frac{a}{\varepsilon_i}\alpha_2^i\right) \smile \gamma_2' =
\left(\frac{a}{\varepsilon_i}\alpha_2^i\right) \smile \delta_4 = 0.
\end{gather*}
Moreover, let $d$ be the order of $r$ in $\U(\zz_a)$. If $d$ is even,
\[
(\alpha_2^d+\beta_2^d+\delta_4^{d/2})\smile\underline{\phantom{M}}\colon
H^{n}(\zz_a\rtimes_\beta(\zz_b\times Q_{2^i});\zz) \to
H^{n+2d}(\zz_a\rtimes_\beta(\zz_b\times Q_{2^i});\zz)
\]
is an isomorphism for $n>0$. If $d$ is odd, then
\[
(\alpha_2^{2d}+\beta_2^{2d}+\delta_4^d)\smile\underline{\phantom{M}}\colon
H^{n}(\zz_a\rtimes_\beta(\zz_b\times Q_{2^i});\zz) \to
H^{n+4d}(\zz_a\rtimes_\beta(\zz_b\times Q_{2^i});\zz)
\]
is an isomorphism for $n>0$.
\end{theorem}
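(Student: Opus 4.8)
The plan is to read everything off the Lyndon--Hochschild--Serre spectral sequence $E$ of $1\to\zz_a\to G'\to\zz_b\times Q_{2^i}\to1$, where $G'=\zz_a\rtimes_\beta(\zz_b\times Q_{2^i})$; only the statements about the cup product and the periodicity need proof, the additive structure being the preceding theorem. We have already seen that $E$ degenerates, $E_\infty=E_2$, with bottom row $E_\infty^{p,0}=H^p(\zz_b\times Q_{2^i};\zz)$, left column $E_\infty^{0,2j}\cong\zz_{\varepsilon_j}$, and every other term zero. First I would record that, since $\gcd(a,2^ib)=1$, the extension splits (Schur--Zassenhaus); fixing a section $s\colon\zz_b\times Q_{2^i}\to G'$, the inflation $\iota\colon H^*(\zz_b\times Q_{2^i};\zz)\to H^*(G';\zz)$ is then an injective ring homomorphism with $s^*\circ\iota=\id$, whose image in degree $n$ is the top filtration subgroup $F^nH^n(G';\zz)=E_\infty^{n,0}$. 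Since $H^n(G';\zz)=0$ for odd $n$ and, for $n=2m>0$, the only nonzero contributions to $H^{2m}(G';\zz)$ are $E_\infty^{0,2m}\cong\zz_{\varepsilon_m}$ and $E_\infty^{2m,0}=H^{2m}(\zz_b\times Q_{2^i};\zz)$, of coprime orders, I would obtain a splitting of graded abelian groups $H^*(G';\zz)=C^*\oplus I^*$, where $I^*=\iota\bigl(H^*(\zz_b\times Q_{2^i};\zz)\bigr)$ and $C^{2m}$ is the cyclic subgroup generated by the class $(a/\varepsilon_m)\alpha_2^m$ restricting to a generator of $H^{2m}(\zz_a;\zz)^{\zz_b\times Q_{2^i}}$.

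Next I would determine the ring structure, the key point being that $s^*$ is an isomorphism on $I^*$ (inverse to $\iota$) and kills $C^{>0}$ for order reasons: a class in $C^{2m}$ has order dividing $a$, whereas $H^{2m}(\zz_b\times Q_{2^i};\zz)$ has no $a$-torsion for $m>0$. For $u\in I^{>0}$ (filtration $\ge2$) and $v\in C^{>0}$, the product $u\smile v$ has filtration $\ge2$, hence lies in $I^*$, so $s^*(u\smile v)=s^*(u)\smile s^*(v)=0$ yields $u\smile v=0$. For $u,v\in C^{>0}$, the product $u\smile v$ is the sum of a column class (its image under $H^*(G';\zz)\to E_\infty^{0,*}$, i.e. the product computed on the left column) and a class in $I^*$; applying $s^*$, which is zero on $C^{>0}$ and injective on $I^*$, shows the $I^*$-summand vanishes, so $u\smile v$ equals the left-column product. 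That product is the cup product of $H^*(\zz_b\times Q_{2^i};-)$ induced by $\smile$ on $H^*(\zz_a;\zz)=\zz[\alpha_2]/(a\alpha_2)$, and passing to the rescaled generators exactly as in Section~\ref{zazb} gives $\bigl((a/\varepsilon_i)\alpha_2^i\bigr)\smile\bigl((a/\varepsilon_j)\alpha_2^j\bigr)=\bigl(a\varepsilon_{i+j}/\varepsilon_i\varepsilon_j\bigr)\bigl((a/\varepsilon_{i+j})\alpha_2^{i+j}\bigr)$, the coefficient being well defined modulo $\varepsilon_{i+j}$ because $\alpha_2^i\smile\alpha_2^j$ is $(\zz_b\times Q_{2^i})$-equivariant and so lands in the invariants. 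As $I^*\cong H^*(\zz_b\times Q_{2^i};\zz)$ as a ring via $\iota$ and the classes $(a/\varepsilon_j)\alpha_2^j$ span $C^{>0}$ additively, this produces the claimed presentation, exhibiting $H^*(G';\zz)$ as the fibre product of $\zz\oplus C^{>0}$ and $H^*(\zz_b\times Q_{2^i};\zz)$ over $\zz$.

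For the periodicity I would use the splitting $H^*(G';\zz)=C^*\oplus I^*$ together with $C^{>0}\smile I^{>0}=0$: for a homogeneous $\omega=\omega_C+\omega_I$ with $\omega_C\in C^{2p}$ and $\omega_I\in I^{2p}$, cup product with $\omega$ on $H^n(G';\zz)$ with $n>0$ is the direct sum of cup product with $\omega_C$ on $C^*$ and cup product with $\omega_I$ on $I^*$, so it is an isomorphism onto $H^{n+2p}(G';\zz)$ precisely when each summand is. On $I^*\cong H^*(\zz_b\times Q_{2^i};\zz)$, this reduces to the periodicity of $\zz_b\times Q_{2^i}$, which has periodic cohomology of period $4$ with $\hat H^4\cong\zz_b\oplus\zz_{2^i}\cong\zz_{2^ib}$ cyclic generated by $\beta_2^2+\delta_4$; hence multiplication by $\beta_2^2+\delta_4$ is an isomorphism in degrees $\ge1$, and, because $\beta_2\delta_4=0$, its $k$-th power equals $\beta_2^{2k}+\delta_4^k$ and realizes $4k$-periodicity. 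On $C^*$, the action of $\zz_b\times Q_{2^i}$ on $H^*(\zz_a;\zz)$ factors through the finite abelian group $\Gamma=\langle r,r_x,r_y\rangle\le\U(\zz_a)$, which acts on $H^{2m}(\zz_a;\zz)=\zz_a$ by $\gamma\mapsto\gamma^m$, so $C^{2m}=H^{2m}(\zz_a;\zz)^\Gamma$ depends on $m$ only modulo $\ell:=\exp(\Gamma)$; the $\Gamma$-invariant element $\alpha_2^\ell=1\in\zz_a$ lies in $C^{2\ell}$ and cup product with it is the identity identification $C^{2m}=C^{2(m+\ell)}$, so $C^*$ is $2\ell$-periodic with periodicity class $\alpha_2^\ell$. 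From the relations imposed on $r,r_x,r_y$ one checks $\ell\mid2d$, and $\ell\mid d$ when $d$ is even; so $(p,\omega_C,\omega_I)=(d,\alpha_2^d,\beta_2^d+\delta_4^{d/2})$ works when $d$ is even and $(2d,\alpha_2^{2d},\beta_2^{2d}+\delta_4^d)$ works when $d$ is odd, giving the asserted periodicity classes.

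The step I expect to be most laborious is confirming, via the diagonal approximations recalled in Section~\ref{prel}, that the left-column products carry exactly the integer coefficients written above after the rescaling of generators, just as in the analogous computation of Section~\ref{zazb}. The genuinely structural ingredient is the vanishing $C^{>0}\smile I^{>0}=0$ and the resulting fibre-product picture; both rest solely on the coprimality $\gcd(a,2^ib)=1$ and on the splitting of the extension $1\to\zz_a\to G'\to\zz_b\times Q_{2^i}\to1$.
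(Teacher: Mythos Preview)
Your argument is correct and arrives at the same conclusions as the paper, but you take a somewhat different route to pin down the multiplicative structure. The paper simply observes that the spectral sequence has $E_2=E_\infty$ concentrated on the two axes and invokes the general relation between the $E_2$-product and the cup product (equation~(\ref{eq:cupE2})) to read off all products directly from $E_2$. You instead use Schur--Zassenhaus to produce a section $s$, split $H^*(G';\zz)=C^*\oplus I^*$ as the $a$-torsion and $2^ib$-torsion parts, and then use the ring homomorphism $s^*$ (together with the order argument $s^*|_{C^{>0}}=0$) to verify $C^{>0}\smile I^{>0}=0$ and to show that products in $C^*$ are computed by restriction to $H^*(\zz_a;\zz)$. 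This is more explicit than the paper's appeal to the multiplicative spectral sequence and makes transparent why no filtration ambiguities arise; the cost is the extra ingredient of the splitting of the extension, which the paper does not need. For the periodicity, both arguments are the same in substance: the paper checks that $\alpha_2^d$ (or $\alpha_2^{2d}$) gives the period on the $q$-axis and $\beta_2^2+\delta_4$ on the $p$-axis, while you phrase the first part via $\ell=\exp\langle r,r_x,r_y\rangle$ and the observation $\ell\mid d$ for $d$ even, $\ell\mid 2d$ always (which follows from $r_x^2\equiv r_y^2\equiv1\pmod a$), and you make explicit that $(\beta_2^2+\delta_4)^k=\beta_2^{2k}+\delta_4^k$ because $\beta_2\delta_4=0$.
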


\begin{proof}
We have just shown that the spectral sequence with $E_2$ page given
by~(\ref{eq:e2zazbq2i}) is such that $E_2^{p,q} = E_\infty^{p,q}$ and
$E_2^{p,q}=0$ if $p$ and $q$ are both positive and also if one of $p$
or $q$ is odd. Hence the product on $E_2$ induces the cup product on
$H^*(\zz_a\rtimes_\beta(\zz_b\times Q_{2^i}); \zz)$. The derivation of
equation~(\ref{eq:zbq2izatilde}) shows that one generator for
$E_2^{0,2j} = \zz_{\varepsilon_j}$ is
$\left(\dfrac{a}{\varepsilon}\right)\alpha_2^j$, where $\alpha_2$ is
one generator of $H^2(\zz_a;\zz)$ such that $H^*(\zz_a;\zz) =
\dfrac{\zz[\alpha_2]}{(a\alpha_2=0)}$. If $d$, the order of $r$ in
$\U(\zz_a)$, is even, then $\alpha_2^d$ is one generator of
$E_2^{0,2d}$ and the cup product with $\alpha_2^d$ defines an
isomorphism
\[
\begin{tikzcd}
E_2^{0,q} \arrow{r}{\cong} & E_2^{0,2d+q}
\end{tikzcd}
\]
for all $q>0$. Also, the cup product with $\beta_2^2+\delta_4$ defines
an isomorphism
\[
\begin{tikzcd}
E_2^{p,0} \arrow{r}{\cong} & E_2^{p+4,0}
\end{tikzcd}
\]
for $p>0$, as can be seen from
Theorem~\ref{theorem:ringzbq2i}. Therefore, if $d$ is even, the cup
product with $(\alpha_2^d+\beta_2^d+\delta_4^{d/2})\in
H^{2d}(\zz_a\rtimes_\beta(\zz_b\times Q_{2^i});\zz)$ defines an
isomorphism
\[
\begin{tikzcd}
H^{n}(\zz_a\rtimes_\beta(\zz_b\times Q_{2^i});\zz) \arrow{r}{\cong} &
H^{n+2d}(\zz_a\rtimes_\beta(\zz_b\times Q_{2^i});\zz)
\end{tikzcd}
\]
for $n>0$. If $d$ is odd, then the cup product with
$(\alpha_2^{2d}+\beta_2^{2d}+\delta_4^d)\in
H^{4d}(\zz_a\rtimes_\beta(\zz_b\times Q_{2^i});\zz)$ defines an
isomorphism
\[
\begin{tikzcd}
H^{n}(\zz_a\rtimes_\beta(\zz_b\times Q_{2^i});\zz) \arrow{r}{\cong} &
H^{n+4d}(\zz_a\rtimes_\beta(\zz_b\times Q_{2^i});\zz)
\end{tikzcd}
\]
for $n>0$.
\end{proof}

\bigskip
Finally, let $G = [\zz_a\rtimes_\beta(\zz_b\times
  Q_{2^i})]\rtimes_\theta\zz$ for some $\theta\colon \zz \to \Aut(\zz_a\rtimes_\beta(\zz_b\times
Q_{2^i}))$ and let's consider the extension
\[
\begin{tikzcd}
\phantom{.}1 \arrow{r} & \zz_a\rtimes_\beta(\zz_b\times Q_{2^i}) \arrow{r} &
G \arrow{r} & \zz \arrow{r} & 1.
\end{tikzcd}
\]
Corollary~\ref{cor:FZ} says that
\[
H^n\bigl([\zz_a\rtimes_\beta(\zz_b\times Q_{2^i})]\rtimes_\theta\zz; \zz\bigr) =
\begin{cases}
H^n(\zz_a\rtimes_\beta(\zz_b\times Q_{2^i}); \zz)^\zz, & \text{if $n$ is even;} \\
H^{n-1}(\zz_a\rtimes_\beta(\zz_b\times Q_{2^i}); \zz)_\zz, & \text{if $n$ is odd;}
\end{cases}
\]
so we will study now the action of $\zz$ on
$H^*(\zz_a\rtimes_\beta(\zz_b\times Q_{2^i});\zz)$. The action on
$H^0(\zz_a\rtimes_\beta(\zz_b\times Q_{2^i}); \zz)$ is trivial, hence
$H^n([\zz_a\rtimes_\beta(\zz_b\times Q_{2^i})]\rtimes_\theta\zz; \zz) = \zz$
for $n=0$ and $n=1$. For $j>0$, we have

\begin{align*}
\Aut(\zz_{\varepsilon_j}\oplus\zz_b\oplus\zz_2^2) &\cong \Aut(\zz_{\varepsilon_j})\oplus \Aut(\zz_b)\oplus \Aut(\zz_2^2),\\
\Aut(\zz_{\varepsilon_j}\oplus\zz_b\oplus\zz_{2^i}) &\cong \Aut(\zz_{\varepsilon_j})\oplus \Aut(\zz_b)\oplus \Aut(\zz_{2^i}),\\
\end{align*}
since $\varepsilon_j$, $b$ and $2$ are pairwise coprime. Therefore, if
$j>0$, in order to study the action of $\zz$ on
\[
H^{2j}(\zz_a\rtimes_\beta(\zz_b\times Q_{2^i}); \zz) =
\begin{cases}
\zz_{\varepsilon_j}\oplus\zz_b\oplus\zz_2^2, & \text{if $j$ is odd,} \\
\zz_{\varepsilon_j}\oplus\zz_b\oplus\zz_{2^i}, & \text{if $j$ is even,}
\end{cases}
\]
we must study the $\zz$-action on each of the summands
\begin{align}\label{eq:summands}
\zz_{\varepsilon_j} &\cong H^0(Q_{2^i}; H^0(\zz_b; H^{2j}(\zz_a;\zz))), \notag\\
\zz_b &\cong H^0(Q_{2^i}; H^{2j}(\zz_b; H^0(\zz_a; \zz))), \\
\zz_2^2 & \cong H^{2j}(Q_{2^i}; H^0(\zz_b; H^0(\zz_a; \zz))), & \text{($j$ odd)} \notag\\
\zz_{2^i} & \cong H^{2j}(Q_{2^i}; H^0(\zz_b; H^0(\zz_a; \zz))) & \text{($j$ even)} \notag
\end{align}

Let
$\theta\colon \zz \to \Aut(\zz_a\rtimes_\beta(\zz_b\times Q_{2^i}))$
be defined by
\begin{align}\label{eq:definetheta}
\theta(1)(1_a) &= c_a\cdot 1_a,\notag\\
\theta(1)(1_b) &= c\cdot 1_a + c_b\cdot 1_b,\\
\theta(1)(x) &= c_x\cdot 1_a + x^k, & \text{($k$ odd)} \notag\\
\theta(1)(y) &= c_y\cdot 1_a + x^{\ell}y, & \text{($\ell\in\{0,1,\ldots,2^{i-1}-1\}$)} \notag
\end{align}
where $c$, $c_a$, $c_b$, $c_x$ and $c_y$ are chosen according
to~\cite{DacibergMarek3}. Let's denote by $\theta_a$ the map
$\theta(1)|_{\zz_a}\colon \zz_a\to \zz_a$, by $\theta_b$ the map
$\overline{\theta(1)}|_{\zz_b}\colon \zz_b\to\zz_b$, where
$\overline{\theta(1)}\colon \zz_b\times Q_{2^i} \to \zz_b\times
Q_{2^i}$ is the induced map on the quotient, and by $\theta_Q$ the map
$\overline{\overline{\theta(1)}}\colon Q_{2^i} \to Q_{2^i}$. With this
notation, we have
\[
\theta_a(1_a) = c_a\cdot 1_a, \quad \theta_b(1_b) = c_b\cdot 1_b, \quad
\theta_Q(x) = x^k, \quad \theta_Q(y) = x^\ell y.
\]
Now the map $H^*(\theta_Q; H^*(\theta_b; H^*(\theta_a;\zz)))$ can be
used to calculate the $\zz$-action on each of the summands described
in Equation~(\ref{eq:summands}). We get that the $\zz$-action on
$H^0(Q_{2^i}; H^0(\zz_b;H^{2j}(\zz_a;\zz)))$ is the multiplication by
$c_a^j$ on $\zz_{\varepsilon_j}$ and the $\zz$-action on $H^0(Q_{2^i};
H^{2j}(\zz_b; H^0(\zz_a; \zz)))$ is the multiplication by $c_b^j$ on
$\zz_b$. As $\Aut(Q_8)$ doesn't follow the pattern of $\Aut(Q_{2^i})$
for $i>3$, the analysis of the $\zz$-action on $H^0(Q_{2^i};
H^{2j}(\zz_b; H^0(\zz_a; \zz)))$ will be broken in the cases $i=3$ and
$i>3$.

\begin{lemma}\label{lemma:theta2}
Let $i>3$ and let $\theta$ be the $\zz$-action on $Q_{2^i}$ given by
$\theta(1)(x)=x^k$, with $k$ odd, and $\theta(1)(y)=x^\ell y$, with
$\ell\in\{0,1,\ldots,2^{i-1}-1\}$. The induced $\zz$-action
$\theta^{(2)}$ on $H^2(Q_{2^i};\zz)$ is the trivial action if $\ell$
is even. If $\ell$ is odd, $\theta^{(2)}$ is determined by the
automorphism $\theta^{(2)}(1)$ given by
$\theta^{(2)}(1)(\gamma_2)=\gamma_2$,
$\theta^{(2)}(1)(\gamma_2')=\gamma_2+\gamma_2'$, where $\gamma_2$ and
$\gamma_2'$ are the generators of $H^*(Q_{2^i};\zz)$ given by
Theorem~\ref{theorem:TomodaPeter}.
\end{lemma}
\begin{proof}
We have $H^2(Q_{2^i}; \zz)\cong H_1(Q_{2^i}; \zz) \cong (Q_{2^i})_{ab}
\cong \zz_2\oplus\zz_2$, with generators $\gamma_2=\overline{x}$ e
$\gamma_2'=\overline{y}$. Since $\theta(1)(x)=x^k$ and $k$ is odd,
$\theta^{(2)}(1)(\alpha_2)=\alpha_2$, and $\theta(1)(y)=x^\ell y$
implies $\theta^{(2)}(1)(\gamma_2')=\gamma_2+\gamma_2'$, if $\ell$ is
odd and $\theta^2(1)(\gamma_2')=\gamma_2'$ if $\ell$ is even.
\end{proof}

By \cite{DacibergMarek3} and \cite{Swan}, the induced $\zz$-action on
$H^{2j}(Q_{2^i};H^0(\zz_b;H^0(\zz_a;\zz)))$ for $j>0$ even is the
multiplication by $k^j$. From that, the following result is obtained:

\begin{proposition}\label{proposition:theta2=theta4m+2}
Let $\theta$ be the $\zz$-action on $Q_{2^i}$ of the previous
Lemma. The $\zz$-action $\theta^{(4m+2)}$ induced by $\theta$ on
$H^{4m+2}(Q_{2^i}; \zz)$ coincides with $\theta^{(2)}$ induced on
$H^2(Q_{2^i};\zz)$.
\begin{proof}
By Theorem~\ref{theorem:TomodaPeter}, the ring $H^*(Q_{2^i};\zz)$
is given by
\[
\frac{\zz[\gamma_2,\gamma_2',\delta_4]}
     {(2\gamma_2=2\gamma_2'=2^i\delta_4=0,\gamma_2^2=0,{\gamma_2'}^{2}=\gamma_2\gamma_2'=2^{i-1}\delta_4)}.
\]
The generators of $H^{4m+2}(Q_{2^i}; \zz)$ are $\delta_4^m\gamma_2$
and $\delta_4^m\gamma_2'$. If the induced $\zz$-action on
$H^2(Q_{2^i};\zz)$ is the trivial one, it is clear that the induced
action on $H^{4m+2}(Q_{2^i};\zz)$ is also trivial. If the action on
$H^2(Q_{2^i}; \zz)$ is not trivial, then
\[
\theta^{(4m+2)}(1)(\delta_4^m\gamma_2)=k^{2m}\delta_4^m\gamma_2
=\delta_4^mk^{2m}\gamma_2 = \delta_4^m\gamma_2
\]
and
\[
\theta^{(4m+2)}(1)(\delta_4^m\gamma_2')=k^{2m}\delta_4^m\gamma_2 + k^{2m}\delta_4^m\gamma_2'
=\delta_4^mk^{2m}\gamma_2+\delta_4^mk^{2m}\gamma_2' = \delta_4^m\gamma_2 + \delta_4^m\gamma_2',
\]
since $k^{2m}$ is odd, and the result follows.
\end{proof}
\end{proposition}

\begin{lemma}
For $Q_8= \langle x,y \mid x^2=y^2, \: xyx=y\rangle$ any action of
$\zz$ on $H^2(Q_8;\zz )=\zz_2\oplus\zz_2$ can be realized as an
induced action of $\zz$ on $Q_8$.
\end{lemma}
\begin{proof}
As in Lemma~\ref{lemma:theta2}, the generators of
$H^2(Q_8)=\zz_2\oplus\zz_2$ are $\gamma_2=\overline{x}$ and
$\gamma_2'=\overline{y}$. Let
$\begin{bmatrix}\overline{a}&\overline{b}
  \\ \overline{c}&\overline{d}\end{bmatrix} \in M_2(\zz_2)$ be the
matrix that represents some $\zz$-action on $H^2(Q_8;\zz) =
\zz_2\oplus\zz_2$, where $a$, $b$, $c$, $d$ are integers and
$\overline{a}$, $\overline{b}$, $\overline{c}$, $\overline{d}$
represent classes modulo $2$. One action of $\zz$ on $Q_8$ that induces
the desired action on $H^2(Q_8;\zz)$ is $\theta(1)(x)=x^{a}y^{c}$ and
$\theta(1)(y)=x^{b}y^{d}$.
\end{proof}

Since any action of $\zz$ on $Q_8$ induces the trivial action on
$H^4(Q_8;\zz)$, a result analogue to
Proposition~\ref{proposition:theta2=theta4m+2} holds:

\begin{proposition}\label{theta2=theta4m+2}
Let $\theta$ be a $\zz$-action on $Q_8$. The induced $\zz$-action
$\theta^{(4m+2)}$ on $H^{4m+2}(Q_8; \zz)$ coincides with the induced
action $\theta^{(2)}$ on $H^2(Q_8; \zz)$.\qed
\end{proposition}

Let $A_j=\gcd(c_a^j-1,\varepsilon_j)$, $B_j=\gcd(c_b^j-1,b)$ e
$C_j=\gcd(k^j-1,2^i)$, and recall that $\theta_Q(x)=x^k$ and
$\theta_Q(y)=x^\ell y$. From our work so far in this section we can
compute the cohomology groups
$H^*\bigl([\zz_a\rtimes_\beta(\zz_b\times Q_{2^i})]\rtimes_\theta\zz;
\zz\bigr)$.

\begin{theorem}\label{theorem:ggroups}
Let $G = [\zz_a\rtimes_\beta(\zz_b\times Q_{2^i})]\rtimes_\theta\zz$,
where $\gcd(a,b) = \gcd(ab,2) = 1$, $i\ge 3$, and $\beta$ and $\theta$
are given by equations~(\ref{eq:definebeta})
and~(\ref{eq:definetheta}), respectively. If\/ $i>3$ and $\ell$ is
even, then
\[
H^n(G;\zz) =
\begin{cases}
\zz, & \text{if $n=0$ or $n=1$,}\\
\zz_{A_{2j}}\oplus\zz_{B_{2j}}\oplus\zz_{C_{2j}}, & \text{if $n=4j$ or $n=4j+1$ (and $j>0$),}\\
\zz_{A_{2j+1}}\oplus\zz_{B_{2j+1}}\oplus\zz_2^2, & \text{if $n=4j+2$ or $n=4j+3$.}\\
\end{cases}
\]
If\/ $i>3$ and $\ell$ is odd, then
\[
H^n(G;\zz) =
\begin{cases}
\zz, & \text{if $n=0$ or $n=1$,}\\
\zz_{A_{2j}}\oplus\zz_{B_{2j}}\oplus\zz_{C_{2j}}, & \text{if $n=4j$ or $n=4j+1$ (and $j>0$),}\\
\zz_{A_{2j+1}}\oplus\zz_{B_{2j+1}}\oplus\zz_2, & \text{if $n=4j+2$ or $n=4j+3$.}\\
\end{cases}
\]
If\/ $i=3$ and the induced action $\theta^{(2)}$ on $H^2(Q_8;\zz)$ is
trivial, then
\[
H^n(G;\zz) =
\begin{cases}
\zz, & \text{if $n=0$ or $n=1$,}\\
\zz_{A_{2j}}\oplus\zz_{B_{2j}}\oplus\zz_8, & \text{if $n=4j$ or $n=4j+1$ (and $j>0$),}\\
\zz_{A_{2j+1}}\oplus\zz_{B_{2j+1}}\oplus\zz_2^2, & \text{if $n=4j+2$ or $n=4j+3$.}\\
\end{cases}
\]
If\/ $i=3$ and the induced action $\theta^{(2)}$ on $H^2(Q_8;\zz)$ is
non-trivial, then
\[
H^n(G;\zz) =
\begin{cases}
\zz, & \text{if $n=0$ or $n=1$,}\\
\zz_{A_{2j}}\oplus\zz_{B_{2j}}\oplus\zz_8, & \text{if $n=4j$ or $n=4j+1$ (and $j>0$),}\\
\zz_{A_{2j+1}}\oplus\zz_{B_{2j+1}}\oplus\zz_2, & \text{if $n=4j+2$ or $n=4j+3$.}\\
\end{cases}
\]
\end{theorem}

\bigskip

We finish by making some remarks about the cohomology ring
$H^*(G;\zz)$. Let $d_k$ be the order of $k$ in $\U(\zz_{2^i})$,
$d_{c_a}$ be the order of $c_a$ in $\U(\zz_a)$, $d_{c_b}$ be the order
of $c_b$ in $\U(\zz_b)$, and $d$ be the order of $r$ in
$\U(\zz_a)$. When $i>3$ and $\ell$ is even, generators for the cyclic
subgroups $\zz_{A_{2j}}$, $\zz_{B_{2j}}$ and $\zz_{C_{2j}}$ of
$H^{4j}(G;\zz)$ (for $j>0$) are given by
$\left(\dfrac{a}{A_{2j}}\right)\alpha_2^{2j}$,
$\left(\dfrac{b}{B_{2j}}\right)\beta_2^{2j}$ and
$\left(\dfrac{2^i}{C_{2j}}\right)\delta_4^j$, respectively. Similarly,
the generators for the subgroups $\zz_{A_{2j+1}}$, $\zz_{B_{2j+1}}$
and $\zz_{C_{2j+1}}$ of $H^{4j+2}(G;\zz)$ are given by
$\left(\dfrac{a}{A_{2j+1}}\right)\alpha_2^{2j+1}$,
$\left(\dfrac{b}{B_{2j+1}}\right)\beta_2^{2j+1}$ and
$\left(\dfrac{2^i}{C_{2j+1}}\right)\delta_4^{j+1}$,
respectively. Also, for $n=4j+1$ (when $j>0$) and $n=4j+3$, the
generators of $H^n(G;\zz)$ are just the classes of the elements that
generate the respective cohomology groups
$H^*(\zz_a\rtimes_\beta(\zz_b\times\zz_{Q_{2^i}});\zz)$ that are given
by Theorem~\ref{theorem:zazbq2iring}. We can apply a similar reasoning
to the one used to prove Theorem~\ref{theorem:zazbzring} and
Theorem~\ref{theorem:zazbq2iring} and write down all the cup products
in $H^*(G;\zz)$ if we so desire, and we can also compute the period in
$H^*(G;\zz)$: if $p=\lcm(d,d_{c_a},d_{c_b},d_k)$, then we have $A_{2j}
= a$, $B_{2j} = b$ and $C_{2j} = 2^i$ if, and only if, $p\mid 2j$. It
now follows from equation~(\ref{eq:diagonalz}) and
Theorem~\ref{theorem:zazbq2iring} that, if $p$ is even,
\[
(\alpha_2^p+\beta_2^p+\delta_4^{p/2}) \smile \underline{\phantom{M}}\colon
H^n(G;\zz)\to H^{n+2p}(G;\zz)
\]
is an isomorphism for $n\ge 2$, and, if $p$ is odd,
\[
(\alpha_2^{2p}+\beta_2^{2p}+\delta_4^p) \smile \underline{\phantom{M}}\colon
H^n(G;\zz)\to H^{n+4p}(G;\zz)
\]
is an isomorphism for $n\ge 2$. The other cases are similar, and we
get the following result:

\begin{theorem}\label{theorem:gring}
Let $G$ be as in Theorem~\ref{theorem:ggroups}, and let $d_k$ be the
order of $k$ in $\U(\zz_{2^i})$, $d_{c_a}$ be the order of $c_a$ in
$\U(\zz_a)$, $d_{c_b}$ be the order of $c_b$ in $\U(\zz_b)$, and $d$
be the order of $r$ in $\U(\zz_a)$. Finally, let $p =
\lcm(d,d_{c_a},d_{c_b},d_k)$. If $p$ is even,
\[
(\alpha_2^p+\beta_2^p+\delta_4^{p/2}) \smile \underline{\phantom{M}}\colon
H^n(G;\zz)\to H^{n+2p}(G;\zz)
\]
is an isomorphism for $n\ge 2$, and, if $p$ is odd,
\[
(\alpha_2^{2p}+\beta_2^{2p}+\delta_4^p) \smile \underline{\phantom{M}}\colon
H^n(G;\zz)\to H^{n+4p}(G;\zz)
\]
is an isomorphism for $n\ge 2$.
\end{theorem}

\begin{corollary}
Using the same notation of the previous theorem, if $p$ is even, then
\[
(\widehat{\alpha_2^p+\beta_2^p+\delta_4^{p/2}}) \smile
\underline{\phantom{M}}\colon \hat{H}^n(G;\zz)\to \hat{H}^{n+2p}(G;\zz)
\]
is an isomorphism for all $n\in \zz$, and, if $p$ is odd,
\[
(\widehat{\alpha_2^{2p}+\beta_2^{2p}+\delta_4^p}) \smile
\underline{\phantom{M}}\colon \hat{H}^n(G;\zz)\to\hat{H}^{n+4p}(G;\zz)
\]
is an isomorphism for all $n\in \zz$.

\end{corollary}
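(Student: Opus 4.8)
The plan is to deduce this from Theorem~\ref{theorem:gring} together with the Farrell-cohomology result stated at the end of Section~\ref{prel}, applied to the finite group $F=\zz_a\rtimes_\beta(\zz_b\times Q_{2^i})$, for which $G=F\rtimes_\theta\zz$. By Theorem~\ref{theorem:zazbq2iring} the group $F$ has periodic integral cohomology, so the only hypothesis of that result not already in hand is a strengthening of Theorem~\ref{theorem:gring}: writing $w=\alpha_2^p+\beta_2^p+\delta_4^{p/2}$ of degree $D=2p$ when $p$ is even, and $w=\alpha_2^{2p}+\beta_2^{2p}+\delta_4^p$ of degree $D=4p$ when $p$ is odd, one needs $w\smile\underline{\phantom{M}}\colon H^i(G;M)\to H^{i+D}(G;M)$ to be an isomorphism for all $i\ge 2$ and \emph{every} $G$-module $M$, whereas Theorem~\ref{theorem:gring} supplies this only for $M=\zz$.

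To obtain the version with arbitrary coefficients I would rerun the argument behind Theorem~\ref{theorem:gring} with $M$ in place of $\zz$. The essential point is that the restriction of $w$ to $H^{D}(F;\zz)$ is a periodicity generator of the finite group $F$: under the restriction isomorphism $H^{D}(G;\zz)\cong H^{D}(F;\zz)^\zz$ of Corollary~\ref{cor:FZ} (valid since $D$ is even) the class $w$ corresponds to $\alpha_2^p+\beta_2^p+\delta_4^{p/2}$ (resp.\ $\alpha_2^{2p}+\beta_2^{2p}+\delta_4^p$), and because the cross-products between the $\alpha$-, $\beta$-, $\gamma$- and $\delta$-classes all vanish in $H^*(F;\zz)$ (Theorems~\ref{theorem:ringzbq2i} and~\ref{theorem:zazbq2iring}, using in particular $\beta_2\delta_4=0$) this element is an appropriate positive power of the periodicity class exhibited in Theorem~\ref{theorem:zazbq2iring}, hence is invertible in $\hat H^*(F;\zz)$. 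Therefore cup product with it is an isomorphism $\hat H^j(F;N)\to\hat H^{j+D}(F;N)$ for every $j\in\zz$ and every $F$-module $N$ (see~\cite[\S\S VI.9, X.6]{Brown82}); for $j\ge 1$ this reads $H^j(F;N)\cong H^{j+D}(F;N)$, and since the class is $\zz$-invariant the isomorphism is $\zz$-equivariant, so it induces isomorphisms on $\zz$-invariants and on $\zz$-coinvariants. Plugging this into the short exact sequence $0\to H^{n-1}(F;M)_\zz\to H^n(G;M)\to H^n(F;M)^\zz\to 0$ of the Lemma in Section~\ref{prel} (which holds for any $G$-module $M$), in degrees $n$ and $n+D$ simultaneously, and using naturality of cup product with $w$, yields a map of short exact sequences whose two outer vertical maps are isomorphisms whenever $n\ge 2$ (so that $n-1\ge 1$ and $n\ge 1$); the five lemma then shows that $w\smile\underline{\phantom{M}}\colon H^n(G;M)\to H^{n+D}(G;M)$ is an isomorphism for all $n\ge 2$ and all $M$.

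With this hypothesis verified, the Farrell-cohomology result of Section~\ref{prel} applies with $d=D$ and $u=w$, giving that $\widehat{w}\smile\underline{\phantom{M}}\colon\hat H^n(G;M)\to\hat H^{n+D}(G;M)$ is an isomorphism for all $n\in\zz$ and all $G$-modules $M$; taking $M=\zz$ yields the two stated isomorphisms. I expect the real obstacle to be the middle step, namely identifying the restriction of $w$ with a periodicity generator of $F$: this is the one point where the explicit ring structure of $H^*(F;\zz)$ from Theorem~\ref{theorem:zazbq2iring} (the vanishing cross-products, and the computation of powers such as $(\beta_2^2+\delta_4)^m=\beta_2^{2m}+\delta_4^m$) must be combined with the precise choice $p=\lcm(d,d_{c_a},d_{c_b},d_k)$, which is exactly what makes $w$ a $\zz$-invariant class of the right order in the first place. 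Once that is in place, the remaining steps are formal.
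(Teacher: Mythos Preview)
Your proposal is correct, and in fact it supplies more than the paper does: the paper states this corollary with no proof, evidently intending it as a direct consequence of the Farrell--cohomology theorem at the end of Section~\ref{prel} together with Theorem~\ref{theorem:gring}. You are right that, taken literally, there is a gap between those two inputs (the Section~\ref{prel} theorem asks for the cup-product isomorphism with \emph{all} coefficient modules $M$, while Theorem~\ref{theorem:gring} supplies only $M=\zz$), and your argument---restrict $w$ to $F$, identify it as a power of the periodicity class of Theorem~\ref{theorem:zazbq2iring} using the vanishing cross-products, then run the short exact sequence of the Lemma with arbitrary $M$ and apply the five lemma---closes that gap cleanly.

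There is a somewhat shorter alternative that the paper may have had in mind and that avoids reopening the computation of $H^*(F;\zz)$. Section~\ref{prel} already records that $G=F\rtimes_\theta\zz$ has periodic Farrell cohomology (via \cite[Theorem~X.6.7(iii)]{Brown82}), so some class $v\in\hat H^{e}(G;\zz)$ is invertible. Since $\hat H^{n}(G;\zz)=H^{n}(G;\zz)$ for $n\ge 2$, Theorem~\ref{theorem:gring} says $\hat w\smile\underline{\phantom{M}}$ is an isomorphism on $\hat H^{n}(G;\zz)$ for all $n\ge 2$; conjugating by suitable powers of the invertible element $v$ shifts this to every $n\in\zz$. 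This already gives the corollary as stated (for $M=\zz$), and in fact shows $\hat w$ is invertible in $\hat H^{*}(G;\zz)$, hence a periodicity generator for all coefficients as well. Your route and this one reach the same conclusion; yours is more explicit about \emph{why} $w$ restricts to a periodicity class of $F$, while the abstract route leans on the existence of some periodicity and a commuting-square argument.
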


\bibliographystyle{plain}
\bibliography{referencias}
\nocite{*}
\end{document}